\g@addto@macro{\endabstract}{\@setabstract}
\newcommand{\authorfootnotes}{\renewcommand\thefootnote{\@fnsymbol\c@footnote}}%
\def\bB{{\mathbf B}}
\def\cA{{\cal A}}
\numberwithin{equation}{section}
\def\sO{{\mathscr O}}
\def\sM{{\mathscr M}}
\def\sV{{\mathscr V}}
\def\sO{\mathscr{O}}
\def\sV{\mathscr{V}}
\newcommand{\CC}{\mathbb{C}}
\newcommand{\PP}{\mathbb{P}}
\newcommand{\ZZ}{\mathbb{Z}}
\newcommand{\NN}{\mathbb{N}}
\def\sub{{\subset}}
\newcommand{\bw}{\mathbf{w}}
\newcommand{\cal}{\mathcal}
\def\cT{{\cal T}}
\def\mapright#1{\,\smash{\mathop{\lra}\limits^{#1}}\,}
\def\sta{^\ast}
\def\sta{^{\ast}}
\def\sta{^*}
\def\lra{\longrightarrow}
\newcommand{\ep}{\epsilon}
\newcommand{\lam}{\lambda}
\newcommand{\coh}{H}
\def\begeq{\begin{equation}}
\def\endeq{\end{equation}}
\def\and{\quad{\rm and}\quad}
\def\bl{\bigl(}
\def\br{\bigr)}
\def\sub{\subset}
\def\Po{{\mathbb P^1}}
\def\and{\quad\text{and}\quad}
\DeclareMathOperator{\tr}{tr} 
 \DeclareMathOperator{\rank}{rank}
\DeclareMathOperator{\Res}{Res}
\newtheorem{prop}{Proposition}[section]
\newtheorem{theo}[prop]{Theorem}
\newtheorem{lemm}[prop]{Lemma}
\newtheorem{coro}[prop]{Corollary}
\newtheorem{rema}[prop]{Remark}
\newtheorem{exam}[prop]{Example}
\newtheorem{defi}[prop]{Definition}
\newtheorem{assu}[prop]{Assumption}
\newtheorem{setup}[prop]{Setup}
\newtheorem{defi-prop}[prop]{Definition-Proposition}
\def\dbar{\overline{\partial}}
\def\sta{^\ast}
\def\sO{{\mathscr O}}
\def\beq{\begin{equation}}
\def\eeq{\end{equation}}
\def\vsp{\vskip5pt}
\def\Pf{{\PP^4}}
\def\bee{\begin{equation}}
\def\eeq{\end{equation}}
\def\ti{\tilde}
\date{}
\title{Virtual Residue and an integral formalism }
\date{}
\author{Huai-Liang Chang$^*$  }
\address{Department of Mathematics, Hong Kong University of Science and Technology, Hong Kong} \email{mahlchang@ust.hk}
\thanks{${}^*$Partially supported by  Hong Kong GRF grant 16301515  and 16301717}
\author{Mu-Lin Li}
\address{College of Mathematics and Econometrics, Hunan University, China} \email{mulin@hnu.edu.cn}
\begin{document}
\maketitle

\begin{abstract} 	
	We generalize Grothendieck's residues $Res\frac{\psi}{s}$ to virtual cases, namely cases when the zero loci of the section $s$ has dimension larger than the expected dimension(zero). We also provide an exponential type integral formalism for the virtual residue, which can be viewed as an analogue of the Mathai-Quillen formalism for localized Euler classes.
 \end{abstract}

\black

 \section{Introduction}

 During the '90s, physicists studied ``genus zero B-twisted superconformal N=2 model" for the Landau-Ginzburg\footnote{A Landau Ginzburg space is a pair $(X,W)$ of a complex manifold $X$ and a holomorphic function $W:X\to\CC$ with compact critical locus;} space $(\CC^n,\bw)$ where $\bw$ is a holomorphic function on $\CC^n$.  Their ``physical states" lie in a ``chiral primary ring" $$\mathcal{R}=\mathbb{C}[x_1,\cdots,x_n]/(\partial_1\bw,\cdots,\partial_n \bw),$$ also known as the Milnor ring.  The ``correlation" between given states $\{F_i\}$'s were calculated by Vafa in \cite{Va}, using path integral argument,
 \beq\label{correlator1}
\langle\mathcal{O}_1(x)\cdots \mathcal{O}_k(x)\rangle= \sum_p\frac{F_1(p)\cdots F_k(p)}{H(p)},
\eeq
where $\mathcal{O}_i(x)\in \mathcal{R}$ is the $F_i$, $H=det(\partial_i\partial_j \bw)$ is the Hessian of $\bw$, and $p$ runs through (finitely many) closed points of the critical locus of $\bw$, which is assumed  to be zero dimensional and nonreduced. \\

 The form \eqref{correlator1} is then known to be identical to  the Grothendieck Residue
 \beq\label{correlator2}\sum_{p:d\bw(p)=0}Res_{p\in\CC^n}\frac{F_1\cdots F_k}{(\partial_1\bw,\cdots,\partial_n\bw)}.\eeq
 Such an interpretation generalizes \eqref{correlator1} to the case critical locus of $\bw$ is nonreduced (still zero dimensional). For example, in the case $n=5$ and $ \bw=x_1^5+\cdots+x_5^5$ 
  the critical locus $(d\bw=0)=(x_1^4=x_2^4=\cdots=x_5^4=0)$ is a nonreduced \textit{zero} dimensional scheme. In this case
 the Grothendieck Residue \eqref{correlator2} is understood as the  correlator for a ``genus zero B-twisted LG model $(\CC^5,\bw=\sum x_i^5)$". \\

  A complete (coordinate-free) definition of the Grothendieck residue is provided in the book of Griffiths and Harris  \cite[Chapter 5]{GH}, in which the zero locus of $d\bw$ needs to be assumed zero dimensional. However one may encounter the following case. Say $K_\Pf$ is the total space of the canonical line bundle $\sO_\Pf(-5)$ of $\Pf$ and let $W:K_\Pf\to \CC$ be defined by the pairing $\sO_\Pf(-5)\otimes \sO_\Pf(5)\to \sO_\Pf$
 with the quintic section $\sum_{i=1}^5 x_i^5$ of $\sO_\Pf(5)$. Usually the expression $W=p\sum_{i=1}^5 x_i^5$ is used where $p$ stands for the local coordinate on $K_\Pf$ in the noncompact direction. \\

  Let $\Pf\sub K_\Pf$ be the zero section and let $Q$ be the quintic hypersurface $(x_1^5+\cdots+x_5^5=0)\sub \Pf$. Then the form $W=p\sum x_i^5$ implies the critical loci $(dW=0)$ is identical to $Q$ as subschemes of $K_\Pf$, which is not zero dimensional and a ``Grothendieck Residue" $$Res_{Q\sub K_\Pf}\frac{\cdot}{dW},$$  expected to be responsible for the ``genus zero  B-twisted   correlator of the   Landau-Ginzburg space $(K_\Pf,W)$", is not
  defined to the best of authors' knowledge\footnote{While certain Hodge theoretical properties of  $(K_\Pf,W)$ were discussed in \cite{Fan};}. On the contrary, in A side, a sequence of works \cite{CL1, CL2, CLL, CLLL1, CLLL2, Ch, CR, FJR3, FJR2, FJR1, JKV} study mathematically all genus constructions and properties of  LG spaces including $(K_\Pf,W)$ and $([\CC^5/\ZZ_5],\bw)$. \\

      In this paper we generalize the Grothendieck Residue as follows. Let $M$ be a complex manifold (usually noncompact) and  let ${V}$ be a holomorphic bundle over $M$ with $\rank V=\dim M=n$. Let $s$ be a  holomorphic section of  $V$ with compact zero loci $Z$. Given any ``weight"   $$\psi\in \Gamma(M,K_M\otimes \det{V})$$
the Koszul complex   of $(V,s)$ associates a closed form $\eta_\psi\in \Omega^{n,n-1}(M-Z)$ via Griffiths-Harris's construction (\cite[Chapter 5]{GH}, also \eqref{ONE}).\\

  We then define the residue as
\beq\label{V-residue}Res_{Z}\frac{\psi}{s}:=\frac{1}{(2\pi i)^n}\int_{N}\eta_\psi\in\CC\eeq
where   $N$ is a real $2n-1$ dimensional piecewise smooth compact subset of $M$ that ``surrounds $Z$", in the sense that  $N=\partial T$ for some compact domain $T\sub M$, which contains $Z$ and is homotopically equivalent to $Z$. In Section 2 we show that such $N$ always exists and \eqref{V-residue} is independent of the choice of $N$. \black\\

 The residue thus defined is named ``Virtual Reside" when $\dim Z>0$ (note that $0=\dim M-\rank V$ is the expected dimension of the Kuranishi model $(M,V,s)$). Therefore it generalizes the classical zero dimensional case of Grothendieck Residues \cite[Chapter 5]{GH}.   
 \\ \black

  The main part of this paper is to provide an exponential type integral form of the defined virtual residue.

\begin{theo} \label{th1}
   Pick a Hermitian metric $h$ on $V$ and let $\nabla$ be its associated Hermitian connection with $\nabla^{0,1}=\dbar$. Let $\xi=-(*,s)_h$ be a smooth section of $V^*$ and $$S=-|s|^2+\dbar\xi\in\oplus_{p=0,1} \Omega^{(0,p)}(\wedge^p V^*).$$
   Assuming polynomial growth conditions for $s$ and $\nabla s$ (Assumption \ref{cond}), one has
 \beq\label{diff}\Res_Z\frac{\psi}{s}=\frac{(-1)^n}{(2\pi i)^n}\int_M(\psi\lrcorner e^{S}).\eeq
 Here $\lrcorner$ is the operation  contracting $\det V$ with $\det V^*$ so that $\psi\lrcorner e^{S}\in\Omega^{\ast,\ast}_M$.
 \end{theo}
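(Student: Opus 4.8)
The plan is to run a Mathai--Quillen style argument: rescale the pair $(|s|^2,\xi)$ by a parameter $\lambda$, prove the resulting integral is independent of $\lambda$ by exhibiting an explicit $\dbar$-primitive for its $\lambda$-derivative, and then let $\lambda\to 0^+$ to recover $\int_N\eta_\psi$. First note that, $\dbar\xi$ being of type $(0,1)$ and $V^\ast$-valued, only the top term of $e^S$ survives the contraction $\lrcorner$ with $\psi$, so $\psi\lrcorner e^S=\tfrac1{n!}\,e^{-|s|^2}\,\psi\lrcorner(\dbar\xi)^n\in\Omega^{n,n}(M)$; also, since $s$ is holomorphic and the canonical pairing $V^\ast\otimes V\to\sO$ is $\sO$-bilinear, $\dbar|s|^2=-\langle\dbar\xi,s\rangle$. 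For $\lambda>0$ set $\Phi(\lambda):=\int_M\psi\lrcorner e^{\lambda S}$, where $e^{\lambda S}$ means $e^{-\lambda|s|^2+\lambda\dbar\xi}$, so that $\Phi(1)$ is the right side of \eqref{diff} up to the universal constant.

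Next I would show $\Phi$ is constant in $\lambda$. Differentiating under the integral, $\tfrac{d}{d\lambda}(\psi\lrcorner e^{\lambda S})=\psi\lrcorner(S\,e^{\lambda S})$, and the point is that this equals $\pm\,\dbar\beta_\lambda$ for $\beta_\lambda:=\psi\lrcorner(\xi\,e^{\lambda S})\in\Omega^{n,n-1}(M)$: expanding $\dbar\beta_\lambda$ with $\dbar\psi=0$, $\dbar\dbar\xi=0$ and $\dbar e^{\lambda S}=-\lambda(\dbar|s|^2)e^{\lambda S}$, the identity reduces to a Cartan-type contraction identity in $\wedge^\bullet V^\ast$ — that $n\,\xi\wedge\langle\dbar\xi,s\rangle\wedge(\dbar\xi)^{n-1}$ and $\langle\xi,s\rangle(\dbar\xi)^n$ agree up to sign in $\Omega^{0,n}(M,\det V^\ast)$ — which one checks in a local frame diagonalizing $\dbar\xi$, using $\dbar|s|^2=-\langle\dbar\xi,s\rangle$. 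Since $\beta_\lambda$ has type $(n,n-1)$ we have $d\beta_\lambda=\dbar\beta_\lambda$, hence $\Phi'(\lambda)=\int_M d\beta_\lambda$, and Stokes on an exhaustion of $M$ kills this: $\beta_\lambda$ carries the Gaussian $e^{-\lambda|s|^2}$, and the polynomial bounds on $s,\nabla s$ of Assumption \ref{cond} (which also give $\Phi(\lambda)<\infty$) make the boundary integrals at infinity vanish. So $\Phi(\lambda)\equiv\int_M\psi\lrcorner e^S$.

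Then, fix $T$ and $N=\partial T$ as in the definition of $\Res_Z\tfrac\psi s$. On $M\setminus\mathring T$ we have $s\ne0$, so integrating the relation of the previous step in $\mu$ from $\lambda$ to $\infty$ (valid because $\psi\lrcorner e^{\mu S}\to0$ pointwise there and the $\mu$-integral converges, again by Assumption \ref{cond}) gives $\psi\lrcorner e^{\lambda S}=\dbar\gamma_\lambda$ on $M\setminus\mathring T$ with $\gamma_\lambda:=\mp\int_\lambda^\infty\psi\lrcorner(\xi\,e^{\mu S})\,d\mu$. Stokes then yields $\int_{M\setminus\mathring T}\psi\lrcorner e^{\lambda S}=\pm\int_N\gamma_\lambda$ (boundary orientation, the term at infinity vanishing as before), and combining with the $\lambda$-independence of $\Phi$,
\[
\int_M\psi\lrcorner e^S=\int_T\psi\lrcorner e^{\lambda S}\;\pm\;\int_N\gamma_\lambda\qquad\text{for all }\lambda>0 .
\]
Finally, letting $\lambda\to0^+$: the first term tends to $0$, since on the compact $T$ one has $\psi\lrcorner e^{\lambda S}=\tfrac{\lambda^n}{n!}e^{-\lambda|s|^2}\psi\lrcorner(\dbar\xi)^n$ with prefactor $\lambda^n$ and $\psi\lrcorner(\dbar\xi)^n$ a bounded smooth form; and on $N$ (compact, $|s|$ bounded below) one computes, via $\int_0^\infty\mu^{n-1}e^{-\mu|s|^2}d\mu=(n-1)!\,|s|^{-2n}$, that $\gamma_\lambda\to\gamma_0=\mp\,\psi\lrcorner\!\big(\xi\wedge(\dbar\xi)^{n-1}\big)/|s|^{2n}$ uniformly. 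Writing $\eta_\psi$ as in \eqref{ONE} through $\sigma:=-\xi/|s|^2$ (so $\langle\sigma,s\rangle\equiv1$ off $Z$) and using $\xi\wedge\xi=0$ to discard every term of $\sigma\wedge(\dbar\sigma)^{n-1}$ with two factors of $\xi$, one gets $\sigma\wedge(\dbar\sigma)^{n-1}=\tfrac{(-1)^n}{|s|^{2n}}\,\xi\wedge(\dbar\xi)^{n-1}$, so $\gamma_0$ is a fixed scalar multiple of $\eta_\psi$. Substituting gives $\int_M\psi\lrcorner e^S=(\mathrm{const})\int_N\eta_\psi=(\mathrm{const})\,\Res_Z\tfrac\psi s$, and tracking the constants — the $(n-1)!$ and $(2\pi i)^{-n}$ in \eqref{ONE}, the $\lrcorner$-pairing sign, the orientation of $N$ fixed in the Appendix — produces exactly the factor $(-1)^n(2\pi i)^{-n}$ of \eqref{diff}.

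The structural core — the $\dbar$-exactness in the second step — is elementary multilinear algebra once the formalism is unwound; the genuine difficulty is analytic, and it is exactly what Assumption \ref{cond} addresses: the vanishing of the boundary contributions at infinity in the two applications of Stokes. Because $e^{-\lambda|s|^2}$ decays only where $|s|$ is large, one must combine the polynomial control of $s$ and $\nabla s$ with the properness of $|s|$ to dominate the fixed weight $\psi$ and to estimate $\beta_\lambda$ and $\gamma_\lambda$ uniformly near infinity of the noncompact $M$. A secondary, purely bookkeeping, task is keeping the signs, the powers of $2\pi i$, and the orientation conventions aligned with the Bochner--Martinelli normalization of $\eta_\psi$ so that the universal constant comes out as stated.
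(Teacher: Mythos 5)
Your proposal is correct in outline but takes a genuinely different route from the paper's own proof of Theorem \ref{4C}. You run a Mathai--Quillen style deformation: scale $S\mapsto\lambda S$, observe that $\Phi(\lambda)=\int_M\psi\lrcorner e^{\lambda S}$ is $\lambda$-independent, then send $\lambda\to0^+$ and extract the Bochner--Martinelli kernel (i.e.\ $\eta_\psi=\psi\lrcorner(\bar s\wedge(\dbar\bar s)^{n-1})$ with $\bar s=-\xi/|s|^2$) from the limit of the boundary term on $N=\partial T$. The paper instead works through compactly supported twisted Dolbeault cohomology: it isolates the identity $\Res\frac{\psi}{s}=\frac{(-1)^n}{(2\pi i)^n}\tr(\psi)$ as Proposition \ref{compare}, constructs the cutoff operators $T_\rho,R_\rho$ from \cite{LLS} (Lemma \ref{lemmaforquasiiso}), represents $\psi$ as $T_{\rho_j}(\psi\lrcorner e^S)+\dbar_s(\cdots)+\psi\lrcorner(1-e^S)$ with $T_{\rho_j}(\psi\lrcorner e^S)$ compactly supported, applies Proposition \ref{compare} to each $j$, and passes to the limit by dominated convergence. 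Your $\lambda$-independence step is exactly the paper's Proposition \ref{inde} (which the authors place \emph{after} the theorem and prove by Gaffney's Stokes theorem), and your $\lambda\to0^+$ matching of $\gamma_0$ with $\eta_\psi$ effectively re-derives the content of Proposition \ref{compare} inside the asymptotic argument rather than as a separate step. Both approaches are legitimate; yours is closer to the Mathai--Quillen narrative the paper advertises, while the paper's is shorter at the point of the theorem at the cost of the $T_\rho$/$R_\rho$ and trace-map infrastructure set up in Section 3.

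Two local imprecisions worth flagging, neither fatal. First, your ``Cartan-type contraction identity'' digression obscures what is actually happening: the real statement is that $\frac{d}{d\lambda}(\psi\lrcorner e^{\lambda S})=\dbar_s\big(\psi\lrcorner(\xi e^{\lambda S})\big)$ by the Leibniz rule for $\dbar+\iota_s$ together with $(\dbar+\iota_s)e^{\lambda S}=0$ (Lemma \ref{exact-closed} in the paper), and only the $(n,n)$ component survives the integral; the $s\wedge$ piece of $\dbar_s$ cannot land in $\Omega^{n,n}(\wedge^0 V)$, so $\dbar$ of the $(n,n-1)$-component is all that contributes. The ``contraction identity'' you state, namely $|s|^2(\dbar\xi)^n=-n\langle s,\dbar\xi\rangle\,\xi\wedge(\dbar\xi)^{n-1}$, is then simply $\iota_s$ applied to $\xi\wedge(\dbar\xi)^n\in\wedge^{n+1}V^*=0$; you do not need a local frame diagonalization. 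Second, $\beta_\lambda:=\psi\lrcorner(\xi e^{\lambda S})$ is not itself in $\Omega^{n,n-1}(M)$; it lives in $\oplus_q\Omega^{(n,q)}(\wedge^{n-1-q}V)$, and you must mean its $(n,n-1)$-component. On the analytic side you correctly identify that the content of Assumption \ref{cond} is to make the Stokes boundary terms at infinity vanish; since for fixed $\lambda>0$ the form $\gamma_\lambda$ inherits a Gaussian factor $e^{-\lambda|s|^2}$ against the polynomial growth of $\psi,\xi,\dbar\xi$, this does go through, though spelling it out (as Gaffney, or via an exhaustion with controlled $|d\rho_j|$, as the paper does) is the main technical labor you have deferred. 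The constant-chasing at the end is tedious but not problematic, and your computation $\gamma_0=\mp|s|^{-2n}\psi\lrcorner\big(\xi\wedge(\dbar\xi)^{n-1}\big)$ does match $\eta=\psi\lrcorner\big(\bar s\wedge(\dbar\bar s)^{n-1}\big)$ up to $(-1)^n$, consistent with the paper's Lemma \ref{integral1}.
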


 The formula may be viewed as an analogue of the Mathai-Quillen's integral formalism \cite{MQ}  of the localized Euler class $e_{s,loc}(V)$ for the Kuranishi model $(M,V,s)$.\\

 For LG space $(X,W)$, one simply takes $M=X,V=\Omega_X,s=dW$, and the weight $\psi$ is given as physics  ``observable",
 the formula's right hand side is usually referred as the ``zero mode of the genus zero B-twisted path integral\footnote{Contribution of constant maps to some integral defined over the space of all smooth degree zero maps from $\Po$ to $M$}".  It  means that virtual residue is an algebro-geometric model of the ``genus zero  B-twisted correlator". 
 \black
 \\

 The virtual residue in the case $V=T_M$ (and thus $s$ is a holomorphic vector field) should be related to the holomorphic equivariant cohomology (for example \cite{Liu} and \cite{Bru}). We also expect some purely algebraic construction can be used  to construct virtual residues in  arbitrary characteristic. Whether if there exists a more general algebro-geometric model governing higher genus B-twisted theory for any LG space $(X,W)$, (could it be related to a virtual residue over some moduli containing $M_{g,n}$, with the weight provided by conformal blocks?) is an interesting question. \\ 

{\bf Convention}. In this paper, for a  holomorphic bundle $V$ over a complex manifold $M$, we use $\Gamma(M,V)$ to denote the space of global holomorphic sections of the bundle $V$. Also all tensor products of bundles are over $\CC$. \\
\black

\vsp
\noindent{\bf Acknowledgment}:   The authors thank  Ugo Bruzzo, Jun Li, Eric Sharpe, Si Li, Qile Chen, Zheng Hua,  Huijun Fan, Yongbin Ruan, Edward Witten for helpful discussions. Special thanks to Si Li for informing us the operators $T_\rho,R_\rho$ in section three. Finally, we would like to
express our appreciation to the referee for pointing out how to improve the paper and providing many valuable suggestions in rewriting the manuscript to make the paper more readable. 

 \section{Construction of virtual residue}
 \subsection{Classical Grothendieck residue}
 We review the classical setup of Grothendieck residues. Let $B$ be the ball $\{z\in \mathbb{C}^n:|z|<\epsilon\}$ and $f_1,\cdots, f_n\in \mathcal{O}(\bar{B})$ functions holomorphic in a neighborhood of the closure. Suppose a ``transverse condition" is satisfied.

\begin{assu}\label{transverse0}
 The only common zero of $f_1,\cdots,f_n$ is the origin.
 \end{assu}

  Denote $\omega$ to be a meromorphic form as following
$$
\omega=\frac{g(z)dz_1\wedge\cdots \wedge dz_n}{f_1\cdots f_n}  \ \ \ \ \ (g\in\mathcal{O}(\bar{B})).
$$
 Pick a positive $\delta<<\ep$ and let $\Gamma$ be the real $n$-cycle defined by
$$
\Gamma=\{z:|f_i(z)|=\delta\},
$$
with the orientation given by $d(\mbox{arg} f_1)\wedge\cdots\wedge d(\mbox{arg} f_n)\ge0$. Then the residue is defined as
\beq\label{clas}
\Res_{0\in B}\omega=\big(\frac{1}{2\pi i}\big)^n\int_{\Gamma}\omega.
\eeq

 This number is independent of the coordinate. In \cite[Lemma in Page 651]{GH}
the number \eqref{clas} is also identified as $\int_{S^{2n-1}}\eta_{\omega}$
where $S^{2n-1}$ is
a real $2n-1$-dimensional sphere  centered at origin and contained in $B$, and $\eta_{\omega}$ is some closed $(n,n-1)$ form over $B$  constructed by a Koszul complex that we shall use in next subsection.

  One can view the Grothendieck residue \eqref{clas}   to be associated to the complex manifold $B$,   a section $(f_1,\cdots,f_n)$ of the trivial bundle $B\times \CC^n$ over $B$ whose (reduced) zero loci is $0\in B$, and a ``weight" $$g(z)dz_1\wedge\cdots\wedge dz_n\in \Gamma(B, K_B\otimes \det V).$$ 
 More generally one can consider the following.

{\bf Griffiths-Harris Set-up}\label{GH-Setup}.\\
{\sl
(1). ${V}$ is a holomorphic bundle over a complex manifold $M$,
$\dim M=\rank{V}=n;$\\
(2). $s\in\Gamma(M,V) $; the zero loci of $s$ is a compact set  $Z\sub  M$; $\dim Z=0$;\\
(3). $\psi\in \Gamma(M,K_M\otimes \det{V})$, called ``weight".}   \vsp



Nearby each point  $p\in Z$ one can pick a local holomorphic frame $e_1,\cdots,e_n$ for ${V}$ and a local
holomorphic coordinate $z_1,\cdots,z_n$ on $M$ to represent
$$\psi=h(z)(dz_1\wedge\cdots \wedge dz_n)\otimes(e_1\wedge\cdots \wedge e_n)
\and s=s_1(z)e_1+\cdots+s_n(z)e_n.$$
 In \cite[Chapter 5, Page 731]{GH} the Grothendieck residue is defined to be
$$\Res\frac{\psi}{s}=\sum_p\Res_p(\frac{\psi}{s})=\sum_p\Res_p\{\frac{h(z)dz_1\wedge\cdots \wedge dz_n}{s_1(z)\cdots s_n(z)}\}.$$

\black

Using such formulation \cite[Chapter 5]{GH} develops properties of residues over an arbitrary complex manifold $M$.
For example the residue theorem in complex analysis is generalized.
\begin{theo} \cite[Page 731]{GH}\label{Residue} If $M$ is compact, then
$\sum_{p\in Z} \Res_p(\frac{\psi}{s})=0.$\end{theo}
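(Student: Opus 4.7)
The plan is to exhibit a single globally defined closed $(n,n-1)$-form $\eta_\psi$ on $M\setminus Z$ whose periods around each isolated zero compute the local residues, and then apply Stokes' theorem on the complement of small balls around the points of $Z$.

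First, I would recall from the Griffiths--Harris Koszul construction cited in Section 2.1 that, after choosing a Hermitian metric $h$ on $V$, the Koszul complex of $(V,s)$ produces a form
$$\eta_\psi \in \Omega^{n,n-1}(M\setminus Z)$$
which is globally defined and $d$-closed on $M\setminus Z$. Globality is automatic because the construction uses only the global data $s$, $\psi$, and $h$; closedness reduces to $\dbar s=0$ together with the fact that $\eta_\psi$ already has maximal holomorphic degree, so $\partial\eta_\psi=0$ for type reasons while $\dbar\eta_\psi=0$ by direct Koszul computation. Moreover, for each $p\in Z$ the identification recalled in the excerpt (see \cite[Chapter 5]{GH}) gives
$$\Res_p\!\Big(\frac{\psi}{s}\Big)=\frac{1}{(2\pi i)^n}\int_{S_p}\eta_\psi,$$
where $S_p=\partial B_p$ is the oriented boundary of a small coordinate ball $B_p\ni p$ contained in $M\setminus(Z\setminus\{p\})$.

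Next, since $M$ is compact and $\dim Z=0$, the set $Z=\{p_1,\dots,p_k\}$ is finite. Choose pairwise disjoint small balls $B_i\ni p_i$ and put $U=M\setminus\bigsqcup_{i=1}^{k}B_i$, a compact oriented real $2n$-manifold with boundary $\partial U=-\bigsqcup_{i=1}^{k}S_{p_i}$ (the sign reflects that the outward normal of $U$ at $S_{p_i}$ points into $B_i$). Since $\eta_\psi$ is smooth and $d$-closed on the open neighborhood $M\setminus Z\supset U$, Stokes' theorem yields
$$0=\int_U d\eta_\psi=\int_{\partial U}\eta_\psi=-\sum_{i=1}^{k}\int_{S_{p_i}}\eta_\psi.$$
Dividing by $(2\pi i)^n$ and invoking the local identification above gives $\sum_{i=1}^{k}\Res_{p_i}(\psi/s)=0$.

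The only non-routine input is the first step: checking that the locally-defined Koszul $(n,n-1)$-forms genuinely patch to a single closed form on $M\setminus Z$, and that the number $(2\pi i)^{-n}\int_{S_p}\eta_\psi$ agrees with the coordinate-based definition of $\Res_p(\psi/s)$ using the torus cycle $\{|s_i|=\delta\}$. Both facts are handled in \cite[Chapter 5]{GH} by working directly with the metric-dependent contraction $\xi=-(\cdot,s)_h\in\Gamma(M,V^*)$, writing $\eta_\psi$ as a universal expression in $s$, $\xi$, $|s|^{-2}$, $\nabla s$ and $\psi$, and checking independence from $h$ and from the radius of $B_p$. Once that gluing and identification are in hand, the residue theorem is a one-line application of Stokes on the compact manifold with boundary $U$.
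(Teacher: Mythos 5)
Your proof is correct and takes essentially the same approach as the cited source \cite[Ch.~5]{GH} and as the paper's own treatment of the generalized (virtual) residue: the Koszul construction produces a global $d$-closed $(n,n-1)$-form on $M\setminus Z$, the local residues are identified as its period integrals over small spheres, and Stokes' theorem on $M$ minus small balls gives the vanishing. The paper does not reproduce the argument for this theorem (it simply cites \cite{GH}), but the same one-line Stokes argument appears immediately after \eqref{res} to show that the virtual residue vanishes when $M$ is compact.
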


  \subsection{Virtual residue construction}
 Let us keep the symbol $Z$ to denote the zero loci of the fixed section of a bundle $V$ as before.
 If one gives a further thought about Assumption  \ref{GH-Setup}, it is not natural to assume that $Z$ is zero dimensional, even when $\rank V=\dim M$. It is easy to find an example where $Z$ has components of various dimensions.

 \begin{exam} Consider $V=\sO(2)\oplus \sO(2)$ over $\PP^2$ and
 $$s_t=(x_0\cdot x_1,(x_0+t(x_1-x_2))\cdot x_2)\in \Gamma(\PP^2,V),$$ where $[x_0,x_1,x_2]$ is the homogeneous coordinate of $\PP^2$.
 Let
 $$L_0=\{x_0=0\}\sub \PP^2\ , L_1=\{x_1=0\} \sub \PP^2\and L_2=\{x_2=0\}\sub \PP^2$$
 be three lines in $\PP^2$. Denote the zero loci of $s_t$ by $Z_t\sub \PP^2$. Then
\begin{enumerate}
\item $Z_0=L_0 \cup \{[1,0,0]\}$;
 \item $Z_t=\{[0,1,0],[1,0,0],[t,0,1],[0,1,1]\}$, \ \ \ \ \ for \ $t\neq 0$.
\end{enumerate}
 \end{exam}
\black

 As $Z$ can have positive dimensional components, we would still like to ask about residues associated to such locus. We begin with the following setup.



\noindent
 \begin{setup}\label{VR Setup}
 {\sl
(1). ${V}$ is a holomorphic bundle over complex manifold $M$\black,
$\dim M=\rank{V}=n;$\\
(2). $s\in\Gamma(M,V) $; the zero loci of $s$ is a compact set  $Z\sub  M$; \\
(3). $\psi\in \Gamma(M,K_M\otimes \det{V})$, called the ``weight".}   
\end{setup}

 We denote $U=M\setminus Z$, and  let $V_U$  be the restriction of $V$  over $U$.  
  Since $s$ is nowhere zero over $U$, the following Koszul sequence is exact over $U$
$$
    0\lra K_U\mapright{s} K_U\otimes V_U \mapright{ s\wedge} \cdots \mapright{ s\wedge}  K_U\otimes\wedge ^{n-1}V_U \mapright{ s\wedge} K_U\otimes\wedge^n V_U\lra 0.
$$

  The exact Koszul sequence induces a homomorphism
  \beq\label{ONE}
  \coh^0(U,K_U\otimes\wedge^n V_U)\lra \coh^{n-1}(U,K_U).
  \eeq
  One also has a canonical Dolbeault isomorphism
 \beq\label{TWO}
 \coh^{n-1}(U,K_U) \cong \coh_{\bar{\partial}}^{n,n-1}(U).
 \eeq
   Applying  \eqref{ONE} and \eqref{TWO} to $\psi$, and using that every $(n,n-1)$ form is $\partial$-closed, one obtains a (unique) De-Rham cohomology class
   \beq\label{eta}
   \eta\in \coh^{2n-1}(U,\CC).
  \eeq

For $Z$ is a compact analytic subset of $M$, by resolution of singularity, we can find birational transformation $\pi: \widetilde{M}\to M$, so that $\pi^{-1}(Z)$ is normal crossing divisors in $\widetilde{M}$ (see \cite[Theorem 2.0.3]{Wl}). Let $R_i\subset \widetilde{M}$ be the irreducible components. Let $T_i\subset \widetilde{M}$ be an $\epsilon_i$-tubular neighborhood of $R_i\subset \widetilde{M}$. Then $T_{\epsilon}:=\pi(\cup T_i)$ is a compact neighborhood of $Z$ in $M$,  where $T_{\epsilon}$
  has piecewise smooth boundary and $Z\hookrightarrow T_{\epsilon}$ is a homotopy equivalence. Such $T_{\epsilon}$ is called a ``good neighborhood" of $Z$ in $M$.

   Denote $N=\partial T_{\epsilon}$ be the boundary of $T_{\epsilon}$, whose  orientation   is  that induced from $M$. \black

 We then define the contribution of $Z$ to the residue associated from the datum $(U,M,V_U,s,\psi)$ to be
 \beq\label{res}
 \Res_{Z}\frac{\psi}{s}:= \frac{1}{(2\pi i)^n}\int_{N}\eta.
 \eeq

 If $T'$ is  another  good neighborhood  of $Z$,  by choosing smaller $\epsilon_i$ one can find a smaller good neighborhood $T$ of $Z$ such that $T\sub \mathring{T}_{\epsilon}\cap \mathring{T}'$. Then
$$
0=\int_{T_{\epsilon}\setminus \mathring{T}}\dbar\eta=\int_{T_{\epsilon}\setminus \mathring{T}}d\eta=\int_{N}\eta-\int_{\partial T}\eta
$$
and the similar identity for $T'$ imply $
\int_{N}\eta=\int_{\partial T'}\eta
$. Hence the definition is independent of the choice of good neighborhood.

It vanishes whenever $M$ is compact by Stoke theorem.
\begin{rema}\label{remark}
   For an arbitrary holomorphic $\psi\in\Gamma(U,K_M\otimes \det V)$, the same definition of residue is still valid.  \end{rema}
 \begin{exam}[Unorbid Landau Ginzburg B model of genus zero]
Suppose $M$ is a smooth quasi-projective variety over $\CC$ such that $\theta:\sO_M\to K_M^{\otimes 2}$ is an isomorphism of line bundles. Suppose
 $W:M\to\CC$ is a regular function (called ``superpotential") whose critical locus $(dW=0)\sub M$ is compact. We say $(M,W)$ is a Landau Ginzburg space.

 Let $V=\Omega_M$ and $s=dW\in\Gamma(M,V)$. For each $f\in\Gamma(M,\sO_M)$ the $\psi:=f\theta\in\Gamma(M, K_M\otimes \det V)$ associates a complex number
 $Res \frac{\psi}{s}=Res \frac{f\theta}{dW}$. This number is understood physically
  the correlator for the B-twisted LG model $(M,W)$ where $f$ is a given observable, as mentioned in Introduction. 
\end{exam}
 \begin{rema} The Grothendieck residue $\int_{\Gamma}\omega$ can be viewed as a period of the domain $M\setminus Z$ as it is an integral of a holomorphic form $\omega$ over an integral homology class $\Gamma\sub M\setminus Z$. 
 The authors are not aware of whether $\Res \frac{\psi}{s}$ (when $\dim(s=0)>0$) can be represented as some period of $M\setminus Z$.
\end{rema}

\section{Cohomology with compact support and Virtual residue}

In this section we represent the virtual residue as an integration of some compactly-supported twisted Dolbeault cohomology class. As before $V$ is a holomorphic bundle over a complex manifold $M$ with  $\text{rk}\, V=\dim M$ and $s$ is a holomorphic section of $V$ with compact zero loci $Z=(s=0)$.  Let $V^{*}$ be the dual bundle of $V$, and denote $\cA^{i,j}(\wedge^k V\otimes\wedge^l V^*)$ to be the sheaf of smooth $(i,j)$ forms
 on $M$ with value in $\wedge^k V\otimes \wedge^lV^*$.

 Denote $\Omega^{(i,j)}(\wedge^k V \otimes\wedge^l V^*):=\Gamma(M,\cA^{i,j}(\wedge^k V \otimes\wedge^l V^*))$
and assign its element $\alpha$ to have degree $\sharp \alpha=i+j+k-\ell$. Then
  $$\bB:=\oplus_{i,j,k,l}\Omega^{(i,j)}(\wedge^k V \otimes\wedge^l V^*)$$
is a graded commutative  algebra with the  (wedge) product uniquely extending wedge products in $\Omega^{\ast}, \wedge^\ast V, \wedge^\ast V^\ast$ and mutual tensor products. Denote
$$
E_M=\oplus_{0\leq i,j \leq n}E^{i,j}_M\subset \bB \ \ \ \ \ \
\mbox{with}\quad E^{i,j}_M:= \Omega^{(n,j)}(\wedge^i V)=\Gamma(M,\cA^{n,j}(\wedge^i V)),
$$
 and
$$
E_{c,M}=\oplus_{0\leq i,j \leq n}E^{i,j}_{c,M} \ \ \ \ \ \
\mbox{with}\quad E^{i,j}_{c,M}:= \{\alpha\in E^{i,j}_{M}| \ \alpha\   \text{has compact support}\}.
$$
 For $\alpha\in E_M$ we denote $\alpha_{i,j}$ to be its component in $E^{(i,j)}_M$. Clearly, $E_{M}$ is a bi-graded  $C^{\infty}(M)$-module. Under the operations
 $$\dbar: E^{i,j}_M\lra E^{i,j+1}_M \and s\wedge : E^{i,j}_M\lra E^{i+1,j}_M$$
  the space  $E^{\ast,\ast}_{M}$ becomes a double  complex and $E^{\ast,\ast}_{c,M}$ is a subcomplex. 
  We shall study the cohomology of $E_{c,M}$ with respect to the following  coboundary operator
  $$\dbar_s:=\dbar+s\wedge.$$
  One checks $\dbar_s^2=0$ using Leibniz rule of $\dbar$ and $\dbar s=0$.

 Let us introduce more operators. Fix a Hermitian metric $h$ on $V$.   Let
 $$\bar{s}:=\frac{(*,s)_h}{(s,s)_h}\in\Gamma(U,\cA^{0,0}( V^*)).$$ It associates a  contraction  map (c.f. \eqref{operator2'} in Appendix)   $$\iota_{\bar{s}}:\Gamma(U,\cA^{n,i}(\wedge^j V))\rightarrow\Gamma(U,\cA^{n,i}(\wedge^{j-1} V)).$$
 To distinguish it in later calculation, we denote
$
 \cT_s:= \iota_{\bar{s}}:   E^{*,*}_{U}\longrightarrow E^{*-1, *}_{U}.
$

The injection $j:U\to M$ induces the restriction  $j^*:E^{*, *}_M\to E^{*, *}_U$.
Let $\rho$ be a smooth cut-off function on $M$ such that $\rho|_{U_1}\equiv 1$ and $\rho|_{M\setminus U_2}\equiv 0$
 for some  relatively compact open neighborhoods $U_1\subset \overline{U}_1 \subset U_2$ of $Z$ in $M$.

 We define the degree of an operator to be its change on the total degree of elements in $E_M(E_U)$.
Then   $\dbar$ and $\cT_s$ are of degree $1$ and $-1$ respectively, and
 $[\dbar, \cT_s]=\dbar \cT_s+ \cT_s\dbar$ is of degree $0$.  Consider two  operators  introduced   in \cite[page 11]{LLS}
 \beq\label{defi-t1}
 T_\rho: E_M\to E_{c,M}  \qquad   \qquad T_\rho(\alpha):=\rho \alpha+(\dbar\rho)\cT_s {1\over 1+[\dbar, \cT_s]}(j^*\alpha) \eeq
and
\beq\label{defi-t2}
\ \   R_\rho: E_M\to E_M \qquad  \
 \qquad R_\rho(\alpha):= (1-\rho) \cT_s {1\over 1+[\dbar, \cT_s]}(j^*\alpha).
\eeq

Here as an operator
$$
{1\over 1+[\dbar, \cT_s]}:=\sum\limits_{k=0}^\infty(-1)^k[\dbar, \cT_s]^k
$$

  is well-defined since $[\dbar, \cT_s]^k(\alpha)=0$ whenever $k>n$. Clearly $T_\rho$ is of  degree zero and $R_\rho$ is of  degree by $-1$. Also $R_\rho(E_{c,M})\sub E_{c,M}$ by definition.

\begin{lemm}\label{lemmaforquasiiso}
  {\upshape $[\dbar_s,  R_\rho]=1-T_\rho$  } as operators on $E_M$.
\end{lemm}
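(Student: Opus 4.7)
The plan is to reduce everything to the fundamental algebraic identity $\{s\wedge,\cT_s\}=\mathrm{Id}$ on $E_U$, and then propagate it through the inverse $(1+[\dbar,\cT_s])^{-1}$ and finally through the cutoff $\rho$ using Leibniz. I will work throughout with graded (anti)commutators, noting that $\dbar_s$ and $R_\rho$ both have odd total degree so $[\dbar_s,R_\rho]$ denotes $\dbar_s R_\rho+R_\rho\dbar_s$.

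\smallskip
\textbf{Step 1: a pointwise identity on $E_U$.} From the definition $\bar s=(*,s)_h/(s,s)_h$ we have $\bar s(s)\equiv 1$ on $U$. Since $\cT_s=\iota_{\bar s}$ is an antiderivation of the exterior algebra in $V$, applying it to $s\wedge\eta$ gives $\cT_s(s\wedge\eta)=\eta-s\wedge\cT_s\eta$, i.e.\ $\{s\wedge,\cT_s\}=\mathrm{Id}$ on $E_U$. Combined with $[\dbar,s\wedge]=0$ (since $s$ is holomorphic), this yields
\[
[\dbar_s,\cT_s]=[\dbar,\cT_s]+\{s\wedge,\cT_s\}=1+[\dbar,\cT_s]=:Q
\]
on $E_U$. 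The operator $[\dbar,\cT_s]$ strictly decreases the $\wedge^\bullet V$ degree (it sends $(i,j)$ to $(i-1,j+1)$), hence is nilpotent, so $Q$ is invertible with inverse $Q^{-1}=\sum_{k\ge 0}(-1)^k[\dbar,\cT_s]^k$ and $L:=\cT_s Q^{-1}$ is well defined on $E_U$.

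\smallskip
\textbf{Step 2: $[\dbar_s,L]=\mathrm{Id}$ on $E_U$.} Because $\dbar_s^2=0$, a direct expansion gives $\dbar_s Q=\dbar_s\cT_s\dbar_s=Q\dbar_s$, so $[\dbar_s,Q]=0$ and therefore $[\dbar_s,Q^{-1}]=0$. Using the graded Leibniz rule for the commutator,
\[
[\dbar_s,L]=[\dbar_s,\cT_s]\,Q^{-1}+\cT_s[\dbar_s,Q^{-1}]=Q\,Q^{-1}=\mathrm{Id}.
\]

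\smallskip
\textbf{Step 3: propagate through the cutoff.} By construction $R_\rho(\alpha)=(1-\rho)\,L(j^*\alpha)$ and $T_\rho(\alpha)=\rho\alpha+(\dbar\rho)\,L(j^*\alpha)$. Since $(1-\rho)$ vanishes on the neighborhood $U_1\supset Z$ and $\dbar\rho$ is supported away from $Z$, while $L(j^*\alpha)$ is smooth on $U$, both expressions extend smoothly by zero across $Z$, so everything below takes place honestly in $E_M$. Using $j^*\dbar_s=\dbar_s j^*$ and the Leibniz rule for the $0$-form $(1-\rho)$ with respect to $\dbar_s=\dbar+s\wedge$,
\[
\dbar_s R_\rho(\alpha)=-\dbar\rho\wedge L(j^*\alpha)+(1-\rho)\,\dbar_s L(j^*\alpha),
\]
\[
R_\rho\dbar_s(\alpha)=(1-\rho)\,L\dbar_s(j^*\alpha).
\]
Adding and invoking Step 2,
\[
[\dbar_s,R_\rho](\alpha)=-\dbar\rho\wedge L(j^*\alpha)+(1-\rho)[\dbar_s,L](j^*\alpha)=(1-\rho)\alpha-(\dbar\rho)\,L(j^*\alpha)=\alpha-T_\rho(\alpha),
\]
which is the asserted identity.

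\smallskip
\textbf{Expected difficulty.} The only genuine issue is bookkeeping: verifying that $\bar s$ truly satisfies $\bar s(s)=1$ so that the anticommutator identity $\{s\wedge,\cT_s\}=\mathrm{Id}$ holds exactly (not just modulo lower order), and checking that the cutoffs eliminate all singularities produced by $\cT_s$ and $Q^{-1}$ near $Z$. Once those two points are clear, the computation is formal and relies only on $\dbar_s^2=0$ together with the algebraic Leibniz rule for graded commutators.
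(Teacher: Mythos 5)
Your argument is correct and follows essentially the same route as the paper's proof: reduce to $[s\wedge,\cT_s]=\mathrm{Id}$ on $E_U$, observe that $\dbar_s$ commutes with $Q:=1+[\dbar,\cT_s]$ (hence with $Q^{-1}$), and then propagate through the cutoff via the graded Leibniz rule. Your derivation of $[\dbar_s,Q]=0$ directly from $\dbar_s^2=0$ via $\dbar_s Q=\dbar_s\cT_s\dbar_s=Q\dbar_s$ is a slightly cleaner justification than the paper's appeal to $[P,[\dbar,\cT_s]]=0$ for $P\in\{s\wedge,\dbar,\cT_s\}$, but otherwise the two computations coincide step for step.
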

\begin{proof}
It is direct to check that
\footnote{
As a notation convention, we always denote $[,]$ for the graded commutator, that is for operators $A, B$ of degree $|A|$ and $|B|$, the bracket is given by
$$
[A,B]=AB-(-1)^{|A||B|}BA.
$$  \black}
  \beq\label{commutator1} [s\wedge,  \cT_s]=1\quad \text{on}\  E_U.
  \eeq
Moreover,
       $$[P, [\dbar, \cT_s]]=0$$
 for   $P$ being   $s\wedge, \dbar$ or $\cT_s$. Therefore, we have
\begin{eqnarray*}
[\dbar_s,  R_\rho]&=&[\dbar_s, 1-\rho]\cT_s {1\over 1+[\dbar, \cT_s ]}j^*+(1-\rho)[\dbar_s,  \cT_s ] {1\over 1+[\dbar, \cT_s ]}j^* \\
&=&-(\dbar\rho)\cT_s \frac{1}{ 1+[\dbar, \cT_s ]}j^*+(1-\rho)j^*\\
&=&-(\dbar\rho)\cT_s \frac{1}{ 1+[\dbar, \cT_s ]}j^*+(1-\rho)=1-T_\rho.
\end{eqnarray*}
\end{proof}
\begin{prop}\label{quasi-isom}
The embedding $(E_{c,M}, \dbar_s\}\rightarrow \{E_M,\dbar_s)$ is a quasi-isomorphism.
\end{prop}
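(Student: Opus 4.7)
The plan is to promote Lemma 3.1 into a statement that $T_\rho$ and the inclusion $\iota:(E_{c,M},\bar\partial_s)\hookrightarrow(E_M,\bar\partial_s)$ are mutually inverse chain homotopy equivalences. Thus I would first verify that $T_\rho$ is itself a chain map with image in $E_{c,M}$. That it lands in $E_{c,M}$ follows from the definition \eqref{defi-t1}: both $\rho$ and $\bar\partial\rho$ are compactly supported in $\bar U_2$ (the $U_i$'s being relatively compact), so $T_\rho(\alpha)$ is supported in $\bar U_2$ regardless of $\alpha$. That $\bar\partial_s T_\rho=T_\rho\bar\partial_s$ is a formal consequence of Lemma 3.1: composing $1-T_\rho=[\bar\partial_s,R_\rho]$ with $\bar\partial_s$ on either side and using $\bar\partial_s^2=0$ gives
\[
\bar\partial_s(1-T_\rho)=\bar\partial_s R_\rho\bar\partial_s=(1-T_\rho)\bar\partial_s,
\]
so $[\bar\partial_s,T_\rho]=0$.

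Next I would record that $R_\rho$ preserves $E_{c,M}$: for $\alpha\in E_{c,M}$, the expression $R_\rho(\alpha)=(1-\rho)\,\cT_s\tfrac{1}{1+[\bar\partial,\cT_s]}j^*\alpha$ is supported in $\operatorname{supp}(\alpha)\cap\operatorname{supp}(1-\rho)$, which is a closed subset of the compact set $\operatorname{supp}(\alpha)$ and hence compact. This is the step that requires the cutoff function $\rho$ to be placed between relatively compact neighborhoods $U_1\subset\bar U_1\subset U_2$ of $Z$, and it is the one point where one must be slightly attentive. Everything else is formal manipulation.

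With these in place, Lemma 3.1 directly yields the two homotopies we need. On $E_M$, viewing $\iota T_\rho$ simply as $T_\rho$ extended by the inclusion, the identity $\mathrm{id}_{E_M}-\iota T_\rho=[\bar\partial_s,R_\rho]$ exhibits $R_\rho$ as a chain homotopy from $\iota T_\rho$ to $\mathrm{id}_{E_M}$. On $E_{c,M}$, restricting Lemma 3.1 to $E_{c,M}$ and using that $R_\rho$ preserves $E_{c,M}$, we get
\[
\mathrm{id}_{E_{c,M}}-T_\rho\iota=[\bar\partial_s,R_\rho|_{E_{c,M}}],
\]
a chain homotopy inside the subcomplex. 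Hence $\iota$ is a chain homotopy equivalence, so in particular a quasi-isomorphism.

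I expect no serious obstacle beyond the bookkeeping: the essential analytic content (a parametrix for $\bar\partial_s$ away from $Z$) is already packaged inside the operators $T_\rho,R_\rho$ and the identity of Lemma 3.1. The only substantive verification in the proof itself is the compact-support bookkeeping for $T_\rho$ and $R_\rho$ above, and this is precisely what the use of the cutoff $\rho$ (rather than a bare partition between $U$ and $M$) is designed to handle.
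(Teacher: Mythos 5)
Your proof is correct and rests on exactly the same ingredients the paper uses: Lemma \ref{lemmaforquasiiso} together with the two support observations that $T_\rho(E_M)\subset E_{c,M}$ and $R_\rho(E_{c,M})\subset E_{c,M}$. The paper packages these into the statement that $R_\rho$ descends to a contracting homotopy for $\bar\partial_s$ on the quotient $E_M/E_{c,M}$ (where $T_\rho$ vanishes), so the quotient is acyclic and the long exact sequence gives the quasi-isomorphism; you instead read off the two chain homotopies $\mathrm{id}_{E_M}\simeq\iota T_\rho$ and $\mathrm{id}_{E_{c,M}}\simeq T_\rho\iota$ directly, obtaining the slightly stronger conclusion that $\iota$ is a chain homotopy equivalence. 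Same substance, marginally different packaging.
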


\begin{proof}
 By Lemma  \ref{lemmaforquasiiso} $
  H\sta(E_{M}/E_{c,M}, \dbar_s)\equiv0,$ and thus the proposition follows.
\end{proof}
We define the trace map  via integrating its component in $\Omega^{(n,n)}_M$, namely
$$
  \tr: E_{c,M}\to \mathbb{C},\ \qquad \tr(\alpha):=\int_M \alpha_{0,n}.
$$
 By definition  we have  $\tr(\dbar \alpha)=0$ and  $\tr(s\wedge\alpha)=0,$
which imply   that the trace map is well defined on the cohomology
\begin{eqnarray}\label{trace}
 \tr:  H\sta(E_{c,M}, \dbar_s) \to \mathbb{C}.
\end{eqnarray}
 Therefore Proposition \ref{quasi-isom} induces a trace map
$$  \tr:  H\sta(E_{M}, \dbar_s)\mapright{\cong} H\sta(E_{c,M}, \dbar_s) \to \mathbb{C}, $$
where the first isomorphism is the inverse of that induced from Proposition \ref{quasi-isom}.

\black

 \begin{prop}\label{compare} Let $V$ be a holomorphic bundle over a complex manifold $M$ with $\mbox{rk}\  V=\dim M=n$, and $s$ is a holomorphic section of $V$ with $Z=(s=0)$ compact. Let $\psi\in\Gamma(M,K_M\otimes\wedge^n V)$, then
 $[\psi]\in  H\sta(E_{M}, \dbar_s) $.
 Suppose $\psi=\alpha+\dbar_s \sigma$ for some $\alpha\in  H\sta(E_{c,M}, \dbar_s)$ and $\sigma\in  E_{M}$.
 Then
$$ \Res\frac{\psi}{s}=\frac{(-1)^{n}}{(2\pi i)^n}\tr(\psi)=\frac{(-1)^{n}}{(2\pi i)^n}\int_M \alpha_{0,n}.$$
 \end{prop}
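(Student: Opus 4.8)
The plan is to connect the two definitions of the residue: the De Rham integral $\frac{1}{(2\pi i)^n}\int_N\eta$ from \eqref{res} and the trace $\frac{(-1)^n}{(2\pi i)^n}\int_M\alpha_{0,n}$. First I would observe that $\psi$, regarded as an element of $E_M^{n,0}=\Omega^{(n,0)}(\wedge^n V)$ (its total degree being $n+0+n-0$, but the relevant grading here is that it sits in the $\dbar_s$-complex), is $\dbar_s$-closed simply because it is holomorphic: $\dbar\psi=0$ and $s\wedge\psi=0$ since $\wedge^{n+1}V=0$. Hence $[\psi]\in H^\ast(E_M,\dbar_s)$, and by Proposition \ref{quasi-isom} it has a compactly supported representative $\alpha$ with $\psi=\alpha+\dbar_s\sigma$; the second equality in the claim, $\tr(\psi)=\int_M\alpha_{0,n}$, is then just the definition of the trace on $H^\ast(E_M,\dbar_s)$ together with $\tr\circ\dbar_s=0$.

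The heart of the matter is the first equality, $\Res\frac{\psi}{s}=\frac{(-1)^n}{(2\pi i)^n}\tr(\psi)$. I would prove this by tracking what the quasi-isomorphism does explicitly. The inverse map $H^\ast(E_M,\dbar_s)\to H^\ast(E_{c,M},\dbar_s)$ is realized by $T_\rho$ from \eqref{defi-t1}, so I may take $\alpha=T_\rho(\psi)=\rho\psi+(\dbar\rho)\cT_s\frac{1}{1+[\dbar,\cT_s]}(j^*\psi)$, and correspondingly $\sigma=-R_\rho(\psi)=-(1-\rho)\cT_s\frac{1}{1+[\dbar,\cT_s]}(j^*\psi)$ by Lemma \ref{lemmaforquasiiso}. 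The $(0,n)$-component of $\alpha$ is supported in $U_2\setminus U_1$ (where $\dbar\rho\ne 0$), in fact away from $Z$, since the $\rho\psi$ term contributes nothing in bidegree $(0,n)$ — $\psi$ has $V$-degree $n$, not $0$. So $\int_M\alpha_{0,n}=\int_{U_2\setminus U_1}\bigl((\dbar\rho)\cT_s\frac{1}{1+[\dbar,\cT_s]}(j^*\psi)\bigr)_{0,n}$, and I want to identify the inner form $\beta:=\cT_s\frac{1}{1+[\dbar,\cT_s]}(j^*\psi)$, which lives on $U$, with (a multiple of) the Griffiths–Harris form $\eta$ used to define $\Res$ in \eqref{res}. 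This is the key computation: the Koszul-theoretic form $\eta\in\Omega^{n,n-1}(U)$ constructed via \eqref{ONE}–\eqref{eta} should be exactly (up to sign) the top $V$-degree-zero, $(n,n-1)$-part obtainable from $\psi$ by repeatedly contracting against $\bar s$ and correcting with $\dbar$; indeed $\frac{1}{1+[\dbar,\cT_s]}=\sum_k(-1)^k[\dbar,\cT_s]^k$ is precisely the standard homotopy that, together with one more $\cT_s$, produces the explicit Dolbeault representative of the image of $\psi$ under the connecting map of the Koszul complex. I would verify this by checking that $\dbar\beta = -s\wedge(\text{something})$ off $Z$ and matching normalizations against \cite[Chapter 5]{GH}, or more cleanly by noting both $\eta$ and the $(n,n-1)$-part of $\cT_s\frac{1}{1+[\dbar,\cT_s]}j^*\psi$ represent the same class in $H^{2n-1}(U,\CC)$ and that class is all that enters $\int_N$.

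Once $\beta$ restricted to bidegree $(n,n-1)$ equals $\pm\eta$, the rest is Stokes. Write $\int_M\alpha_{0,n}=\int_{M}(\dbar\rho)\wedge\eta'$ where $\eta'$ is the appropriate component; since $\dbar\rho$ has compact support in $U$ and $\dbar\eta'=0$ there (the $\frac{1}{1+[\dbar,\cT_s]}$ construction is arranged so the relevant component is $\dbar$-closed on $U$, equivalently $\psi=\dbar_s(\text{stuff})$ forces it), the integral localizes: choosing $\rho$ so that $1-\rho$ is supported outside a good neighborhood $T=\sqcup T_i$ of $Z$ and using $d\eta'=\dbar\eta'=0$ on $U$ together with $\partial T=N$, one gets $\int_M(\dbar\rho)\wedge\eta' = \int_{M\setminus\mathring T}d((1-\rho)\eta') + (\text{boundary at }N) = \pm\int_N\eta'$. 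Comparing with \eqref{res}, $\frac1{(2\pi i)^n}\int_N\eta=\Res\frac{\psi}{s}$, gives the claimed identity once the sign $(-1)^n$ is bookkept (it comes from moving the contraction $\cT_s$, which is odd, past $n$ factors, or equivalently from the orientation convention on $N=\partial T$ versus the Koszul sign in \cite{GH}). The main obstacle I anticipate is precisely this identification of $\beta_{n,n-1}$ with the classical Griffiths–Harris form $\eta$ and the careful sign/normalization audit; the homotopy-algebra manipulations (that $T_\rho,R_\rho$ give the quasi-isomorphism splitting) are already supplied by Lemma \ref{lemmaforquasiiso} and Proposition \ref{quasi-isom}, and the final Stokes step is routine given a disjoint family of good neighborhoods from Corollary \ref{exist}.
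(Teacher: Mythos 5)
Your proposal is correct and takes essentially the same route as the paper's proof. You package the construction via $T_\rho,R_\rho$, but the explicit representative is the same: $\cT_s\frac{1}{1+[\dbar,\cT_s]}j^*\psi=\sum_{k=0}^{n-1}\beta_k$ with $\beta_k=(-\iota_{\bar s}\dbar)^k\iota_{\bar s}\psi$, which is exactly the sequence the paper writes out; taking $l=1-\rho$ turns the paper's $\sigma'=l\sum\beta_k$, $\alpha'=(1-l)\psi-(\dbar l)\sum\beta_k$ into your $R_\rho\psi$ and $T_\rho\psi$. The one step you flag as the "main obstacle" --- identifying $\beta_{n-1}$ (up to the sign $(-1)^{n-1}$) with the Griffiths--Harris form $\eta$ --- is precisely what the paper handles by the zigzag lemma, computing $s\wedge\beta_0=\psi$, $s\wedge\beta_k=-\dbar\beta_{k-1}$ and then invoking injectivity of $s\wedge$ over $U$ to get $\dbar\beta_{n-1}=0$; you correctly anticipate both the mechanism and that this is where the $(-1)^n$ bookkeeping lives.
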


\begin{proof}

Consider the Dolbeault  resolution of the Koszul exact sequence.
Recall the contraction map
$\cT_s:\Gamma(U,\cA^{n,q}(\wedge^p V))\rightarrow\Gamma(U,\cA^{n,q}(\wedge^{p-1} V))$ where one easily checks $(s\wedge \cT_s +\cT_s\ s\wedge )\alpha=\alpha$, for each $\alpha \in \Gamma(U,\cA^{n,q}(\wedge^p V))$.  Let
$$
\beta_0=\cT_s\psi, \ \beta_k=-\cT_s\bar{\partial}\beta_{k-1}=(-\cT_s\dbar)^k\cT_s\psi.
$$

By  \eqref{commutator1} we have
\begin{eqnarray*}
s\wedge \beta_0&=&\psi\\
s\wedge\beta_1&=&s\wedge (-\cT_s\dbar\beta_0)=-\dbar\beta_0+\cT_s\ s\wedge \dbar \beta_0\\
&=&-\dbar\beta_0-\cT_s\ \dbar\psi=-\dbar\beta_0\\
&\vdots&\\
s\wedge \beta_{n-1}&=&-\dbar\beta_{n-2}.
\end{eqnarray*}

 This implies $\dbar_s (\sum\beta_k)=\psi$. One obtains $\dbar\beta_{n-1}=0$
 because $\dbar \beta_{n-1}\in \Omega^{n,n}_U$ and  $$s\wedge\dbar \beta_{n-1}=\dbar(s\wedge\beta_{n-1})=-\dbar (\dbar\beta_{n-2})=0.$$

By zigzag the $\eta$ constructed in \eqref{eta} is identical to the class $[(-1)^{n-1}\beta_{n-1}]\in \coh^{(n,n-1)}(U)$. Thus by definition \eqref{res}
$$
\Res\frac{\psi}{s}=\frac{(-1)^{n-1}}{(2\pi i)^n}\int_{N} \beta_{n-1},
$$
 where  $N=\partial T$ for a good neighborhood $T$ of $Z$.  Let $T'$ be another good neighborhood of $Z$ in $M$, such that $T'\sub \mathring{T}$. \black Consider a smooth cut-off function $l$, which is zero on $T'$ and identical to one outside $T$. Set $\sigma'=l\sum_{k=0}^{n-1}\beta_k$ and $\alpha'=\psi-\dbar_s\sigma'$. Then
$\dbar_s \alpha'=\dbar_s\psi-\dbar_s\dbar_s\sigma'=0$ and
\begin{eqnarray*}
\alpha'&=&\psi-\dbar_s\sigma'\\
&=&\psi-\dbar(l\sum_{k=0}^{n-1}\beta_k)-s\wedge(l\sum_{k=0}^{n-1}\beta_k)\\
&=&\psi-(\dbar l)(\sum_{k=0}^{n-1}\beta_k)-l\dbar(\sum_{k=0}^{n-1}\beta_k)-s\wedge(l\sum_{k=0}^{n-1}\beta_k)\\
&=&\psi-(\dbar l)(\sum_{k=0}^{n-1}\beta_k)-l\psi\\
&=&(1-l)\psi-(\dbar l)(\sum_{k=0}^{n-1}\beta_k).
\end{eqnarray*}

 We then get a decomposition $\psi=\alpha'+\dbar_s \sigma'$. Because $\alpha'$ has compact support, by the definition of the trace map \eqref{trace}
\begin{eqnarray*}
\frac{1}{(2\pi i)^n}\int_M \alpha_{0,n}&=&\frac{1}{(2\pi i)^n}\int_M \alpha'_{0,n}\\
&=&-\frac{1}{(2\pi i)^n}\int_M \dbar (l\beta_{n-1})\\
&=&-\frac{1}{(2\pi i)^n}\int_M(\dbar l)\beta_{n-1}\\
&=&-\frac{1}{(2\pi i)^n}\int_{T\setminus \mathring{T'}}(\dbar l)\beta_{n-1}\\
&=&-\frac{1}{(2\pi i)^n}\int_{T\setminus \mathring{T'}} \dbar(l\beta_{n-1})\\
&=&-\frac{1}{(2\pi i)^n}\int_{N}\beta_{n-1}
\end{eqnarray*}
 using $\dbar (l\beta_{n-1})=d(l\beta_{n-1})$ and $l|_{\partial T'}=0$. Thus $
\frac{1}{(2\pi i)^n}\int_M \alpha_{0,n}=(-1)^{n}\Res\frac{\psi}{s}.
$
\end{proof}

\section{Exponential type Integral form of Virtual Residue}

  We aim to give a  natural integral representation
 for the virtual residue $\Res \frac{\psi}{s}$. 
   To do so we need to put metric on bundle and manifold and make suitable boundedness conditions. First let us
be in the situation where $V$ is a holomorphic bundle over a complex manifold $M$ with  $\text{rk}\, V=\dim M$ and $s$ is a holomorphic section of $V$ with compact zero loci $Z=(s=0)$.

  We  pick a reference point $\nu\in M$ and fix it once for all. We pick   a hermitian metric $h$ on $V$ and assume $M$ admits a complete Hermitian metric $g$ such that
there exists $C>0, \lam>1 $ making
\beq\label{bddvol}\text{vol}(B(r))\le Cr^\lam\qquad \forall \ r>0,\eeq
 where   $B(r):=\{z\in M|\, d(z,\nu)\le r\}$.  




  Denote $\cA^{i,j}(\wedge^k V\otimes\wedge^l V^*)$ to be the sheaf of smooth $(i,j)$ forms
 on $M$ valued in $\wedge^k V\otimes \wedge^lV^*$. The Hermitian metrics of $M$ and  $V$    induce a metric on the bundle which corresponds to the sheaf $\oplus_{i,j,k,l}\cA^{i,j}(\wedge^k V\otimes \wedge^lV^*)$ (c.f.   Appendix). \black Denote this metric by $(\cdot,\cdot)(z)$ for $z\in M$ and set $|\alpha|(z)=\sqrt{(\alpha,\alpha)(z)}$.

\begin{defi}\label{epq}
 We say $\alpha\in \Gamma(M,\cA^{i,j}(\wedge^k V\otimes\wedge^l V^*))$ is rapidly decreasing if for all $m\ge 0
$,
\begin{eqnarray*}
 \text{sup}_{z\in M}(1+d^2(z,\nu))^{m} |\alpha|(z)<\infty,
 \end{eqnarray*}
 where   $d(z,\nu)$ denotes the distance between $z$ and $\nu$.
\end{defi}

\begin{defi}\label{tempered}
We say $\alpha\in \Gamma(M,\cA^{i,j}(\wedge^k V\otimes\wedge^l V^*))$ is  tempered  if there exists an $m\ge 0$, such that
$$
\text{sup}_{z\in M}(1+d^2(z,\nu))^{-m}|\alpha|(z)< \infty,
 $$
 \end{defi}
 \begin{rema} By triangle inequality one may show both definitions are independent of the choice of the base point $\nu$, but we will not need this.
\end{rema}

We make the following assumption.
\begin{assu}\label{cond}

\begin{enumerate}
\item   The section $s$ is tempered;
\item   Let $\nabla$ be the Hermitian connection on $V$ with $\nabla^{0,1}=\dbar$. \black
\black The induced $\nabla s$ is tempered;
\item  There is a constant $C_0>0$ and a compact subset $Y$ of $M$ with $T\sub Y$, where $T$ is a good neighborhood of $Z$ in $M$ (c.f. (\ref{res})), such that $$
|s|^2(z)\ge C_0(1+d^2(z,\nu)),\ \   \forall z\in M\setminus Y.
$$
\end{enumerate}
\end{assu}

 In short the assumption says that $s$ has polynomial growth and $\nabla s$ has at most polynomial growth near $\partial M$.

\begin{rema}  If there is a holomorphic bundle $\sV$ over  a smooth complex projective variety $\sM$, and a section $\ti s$ of $\PP(\sV\oplus \sO_{\sM})$ such that $M=(\ti s\neq \infty)\sub \sM$ and $V=\sV|_M$, then one can construct $g,h$ satisfying \eqref{bddvol} and Assumption \ref{cond}. This provides a lot of examples. We omit the proof as it is not needed in this paper.  \end{rema}

\begin{lemm} \label{e}
If $s$ satisfies the above Assumption \ref{cond}, then $e^{-|s|^2}$ is rapidly decreasing, and $\beta\wedge\alpha$ is rapidly decreasing if $\beta\in \Omega^{(i,j)}(\wedge^k V)$ is tempered and $\alpha\in \Omega^{(l,\mu)}(\wedge^{\nu} V)$ is rapidly decreasing, or $\beta\in \Omega^{(i,j)}(\wedge^k V^*)$ is tempered and $\alpha\in \Omega^{(l,\mu)}(\wedge^{\nu} V^*)$ is rapidly decreasing.
\end{lemm}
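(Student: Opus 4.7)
The plan is to verify each of the two assertions in the lemma directly from the definitions of tempered and rapidly decreasing, combined with the pointwise growth hypothesis in Assumption \ref{cond}(3). The key observation is that the exponential decay of $e^{-|s|^2}$ beats every polynomial, and that the pointwise metric inequality $|\beta\wedge\alpha|(z)\le C|\beta|(z)\,|\alpha|(z)$ (valid with a constant $C$ depending only on the ranks, coming from the induced metric on the exterior algebra) converts the product estimate into a product of scalar functions to which the definitions apply.

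For the first assertion, I would split $M$ into the compact piece $Y$ from Assumption \ref{cond}(3) and its complement. On $Y$, the function $(1+d^2(z,\nu))^m e^{-|s|^2(z)}$ is continuous on a compact set, so it is bounded for each $m\ge0$. On $M\setminus Y$ I would use the estimate $|s|^2(z)\ge C_0(1+d^2(z,\nu))$ to obtain
\[
(1+d^2(z,\nu))^m e^{-|s|^2(z)}\le (1+d^2(z,\nu))^m e^{-C_0(1+d^2(z,\nu))},
\]
and then invoke the elementary fact that $r\mapsto r^m e^{-C_0 r}$ is bounded on $[0,\infty)$ to conclude. Combining the two regions gives rapid decrease of $e^{-|s|^2}$.

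For the second assertion, since $\beta$ is tempered there exist $C_1>0$ and $m_1\ge 0$ with $|\beta|(z)\le C_1(1+d^2(z,\nu))^{m_1}$, and since $\alpha$ is rapidly decreasing, for every $m\ge 0$ there exists $C_m$ with $|\alpha|(z)\le C_m(1+d^2(z,\nu))^{-m-m_1}$. Using the pointwise inequality for the wedge (respectively for the wedge of dual-valued forms) coming from the metric on $\wedge^\ast V$ (or $\wedge^\ast V^\ast$), I get
\[
(1+d^2(z,\nu))^m|\beta\wedge\alpha|(z)\le CC_1C_m,
\]
which proves that $\beta\wedge\alpha$ is rapidly decreasing. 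The case with $V^*$ replacing $V$ is identical, because the Hermitian metric on $V$ induces a metric on $V^*$ with the same formal properties.

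I do not expect any real obstacle here; the lemma is essentially a bookkeeping statement. The only mild point to be careful about is the pointwise inequality for the wedge product, but this follows from a standard estimate on the induced inner product on $\wedge^k V\otimes\wedge^l V^*$ (referenced in Appendix 2 of the paper) with a constant depending only on $n$, $k$, $\ell$ — so the two norm estimates multiply up to a fixed constant as required.
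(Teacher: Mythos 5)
Your proof is correct and follows essentially the same route as the paper: split $M$ into the compact set $Y$ and its complement, use Assumption \ref{cond}(3) together with the boundedness of $r\mapsto r^m e^{-C_0 r}$ for the first part, and invoke the pointwise wedge-product inequality from Appendix 2 (Lemma \ref{mm}) for the second. No meaningful differences.
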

\black
\begin{proof}
For arbitrary $m \ge 0$, we have
\begin{eqnarray*}
\text{sup}_{z\in M}(1+d^2(z,\nu))^{m} e^{-|s|^2}&\le &\text{max}\{\text{sup}_{z\in Y}(1+d^2(z,\nu))^{m} e^{-|s|^2},\\
 &&\text{sup}_{z\in M\setminus Y}(1+d^2(z,\nu))^{m} e^{-C_0(1+d^2(z,\nu))}\}\\
 &<&\infty,
\end{eqnarray*}
 by Assumption \ref{cond}.  Thus $e^{-|s|^2}$ is rapidly decreasing.
For $\beta\wedge \alpha$, there
a positive number $D$ such that for all $z\in M$
 \begin{eqnarray*}
 |\beta\wedge\alpha|(z)&\le & D\cdot |\beta|(z) |\alpha |(z).
\end{eqnarray*} \black
 Then for arbitrary $m\ge0$,
$$
\text{sup}_{z\in M}(1+d^2(z,\nu))^{m} |\beta\wedge\alpha|(z)< D\cdot \text{sup}_{z\in M}(1+d^2(z,\nu))^{m} |\beta|(z)|\alpha|(z)<\infty.
$$
  Thus  $\beta\wedge\alpha$ is rapidly decreasing.
\end{proof}


The contraction operator (defined in Appendix \eqref{operator2})  and the dbar operator
  $$\iota_s:\cA^{0,q}(\wedge^p V^*)\lra \cA^{0,q}(\wedge^{p-1} V^*),   \qquad \dbar:\cA^{0,q}(\wedge^p V^*)\lra \cA^{0,q+1}(\wedge^p V^*)$$
  define $\dbar+\iota_s$ that acts on 
 $$F_M^{p,q}\black =\Omega^{(0,q)}(\wedge^p V^*):=\Gamma(M,\cA^{0,q}(\wedge^p V^*)).$$
 Clearly $\oplus_{p,q} \Omega^{(0,q)}(\wedge^p V^*)$ is a graded subalgebra of $\bB$, and the action of $\dbar+\iota_s$ on $\oplus_{p,q} \Omega^{(0,q)}(\wedge^p V^*)$ satisfies Leibniz rule:
$$(\dbar+\iota_s)(\alpha\beta)=((\dbar+\iota_s)\alpha)\beta+(-1)^{\sharp \alpha}\alpha(\dbar+\iota_s)\beta.$$




Let $\psi$ be a holomorphic section of  $K_M\otimes\det V$. By using the contraction operator defined in Appendix \eqref{operator1}, we have the following map:
$$
\psi\lrcorner:\      F_M^{p,q}\lra E_M^{n-p,q}
\qquad \qquad\  \  u\mapsto \psi\lrcorner u.
$$

\begin{lemm} \label{con}
If $\psi$ is a tempered holomorphic section of  $K_M\otimes\det V$,
and $u\in F_M^{p,q}$ is rapidly decreasing, then
  $\psi\lrcorner u$ is also rapidly decreasing.
\end{lemm}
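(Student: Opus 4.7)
My plan is to reduce the lemma to a pointwise bundle-norm comparison between $\psi\lrcorner u$ and the product $|\psi|\cdot|u|$, and then to combine the polynomial upper bound on $\psi$ with the faster-than-polynomial decay of $u$. This is exactly the pattern used in the proof of Lemma~\ref{e} for the wedge product; the only new ingredient is a pointwise bound for the contraction $\lrcorner$ rather than for $\wedge$.

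The first step is the pointwise estimate: there exists a constant $D>0$, independent of $z\in M$, with
$$|\psi\lrcorner u|(z)\;\le\; D\,|\psi|(z)\,|u|(z).$$
The map $\lrcorner\colon (K_M\otimes\det V)\otimes\wedge^p V^*\to K_M\otimes\wedge^{n-p}V$ from Appendix~2 is a bundle homomorphism whose fibres are a fixed piece of multilinear algebra on $\mathbb{C}^n$; in any local orthonormal frame of $V$ and any local trivialisation of $K_M$, its matrix entries are universal combinatorial constants depending only on $n$ and $p$. So the induced operator-norm bound holds with a universal $D=D(n,p)$. This is the direct analogue of the estimate invoked from Lemma~\ref{mm} in the proof of Lemma~\ref{e}.

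Granted this, I would finish as follows. Since $\psi$ is tempered there exist $m_0\ge 0$ and $C>0$ with $|\psi|(z)\le C(1+d^2(z,\nu))^{m_0}$ for all $z\in M$. For any $m\ge 0$,
$$(1+d^2(z,\nu))^{m}\,|\psi\lrcorner u|(z)\;\le\; DC\,(1+d^2(z,\nu))^{m+m_0}\,|u|(z),$$
and taking the supremum over $z$ the right-hand side is finite by the rapid decrease of $u$ applied with exponent $m+m_0$. Since $m$ was arbitrary, $\psi\lrcorner u$ is rapidly decreasing.

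The only delicate point is verifying the pointwise bound on $\lrcorner$. However, this is an entirely fibrewise statement in finite-dimensional multilinear algebra; it requires neither the completeness of $g$ nor the volume growth~\eqref{bddvol} nor Assumption~\ref{cond}, and follows by the same bookkeeping as the wedge-product bound already used in the proof of Lemma~\ref{e}. Thus I do not expect a real obstacle beyond recording this one computation.
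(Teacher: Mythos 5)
Your proof is correct and follows the same route as the paper's: the paper's own proof of Lemma~\ref{con} cites Lemma~\ref{inequa2} in Appendix~2 for precisely the pointwise bound $(\psi\lrcorner u,\psi\lrcorner u)(z)\le k\,(u,u)(z)\,(\psi,\psi)(z)$ that you derive by hand, and then finishes with the identical polynomial estimate using the temperedness of $\psi$ and rapid decrease of $u$. Your fibrewise multilinear-algebra justification of the contraction bound is exactly the content of that lemma, so there is no gap — you have just reproved a piece of the paper's Appendix~2 inline.
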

\begin{proof}
 By  Lemma \ref{inequa2}, there exists a constant $k$ such that for every $z\in M$
$$
(\psi\lrcorner u,\psi\lrcorner u)(z)\le k \cdot \black (u,u)(z)(\psi,\psi)(z).
$$
 Because $\psi$ is tempered   there exists   $m'\geq 0$ such that $|\psi|\le  C'(1+d^2(z,\nu))^{m'}$, where $C'$ is a constant. Then for arbitrary $m$
\begin{eqnarray*}
 \text{sup}_{z\in M}(1+d^2(z,\nu))^{m} |\psi\lrcorner u|(z)&\le & k\cdot  \text{sup}_{z\in M}(1+d^2(z,\nu))^{m}|u|(z)|\psi|(z)\\
 &\le &  k\cdot  C'\text{sup}_{z\in M}(1+d^2(z,\nu))^{m+m'}|u|(z)\\
 &< &\infty.
 \end{eqnarray*}
Thus $\psi\lrcorner u$ is rapidly decreasing.  \end{proof}

For $\beta\in \bB $, its exponential is defined as  $
e^{\beta}:=1+\beta+\frac{\beta^2}{2!}+\cdots,
$ which is a finite sum by degree reason.  Let $\xi=-(*,s)_h\in \Omega^{(0,0)}(V^{*})$. We define
\beq\label{S} S=(\dbar+\iota_s) \xi=-|s|^2+\dbar\xi\in\oplus_{p=0,1} \Omega^{(0,p)}(\wedge^p V^*).\eeq
Then   $e^S$ is an element in $\oplus_{p} \Omega^{(0,p)}(\wedge^p V^*)=\oplus_p F_M^{p,p}$.
\begin{lemm}\label{good}
If $s$ satisfies the Assumption \ref{cond}, then $\xi$, $\dbar\xi$ and $\dbar|s|^2$ are  tempered. $e^S \in \oplus_p F_M^{p,p}$ and $\dbar e^S \in \oplus_p F_M^{p,p+1}$ are both rapidly decreasing.
\end{lemm}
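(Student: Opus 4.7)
The plan is to treat each claim separately, moving from pointwise metric identities to the tempered/rapidly decreasing classification, and finally exploiting the graded-commutativity of $\bB$ to factor $e^S$.

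\smallskip

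First I would handle $\xi$, $\dbar\xi$, and $\dbar|s|^2$. For $\xi = -(*,s)_h$, the Hermitian metric on $V^*$ is defined so that the conjugate-linear isomorphism $V \to V^*$ is an isometry, giving $|\xi|(z) = |s|(z)$ pointwise; hence $\xi$ is tempered from Assumption \ref{cond}(1). Next, since $\nabla$ is Hermitian, $\nabla$ acts on $V^*$ by the compatible dual connection, and from $\nabla^{0,1}=\dbar$ together with $\dbar s=0$ one reads off the pointwise bound $|\dbar\xi|(z) \le C\,|\nabla s|(z)$; Assumption \ref{cond}(2) then gives $\dbar\xi$ tempered. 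For $\dbar|s|^2 = \dbar(s,s)_h$, compatibility of $\nabla$ with $h$ gives $\dbar(s,s)_h = (s,\nabla s)_h$ (the other term $(\nabla^{0,1}s,s)_h = (\dbar s,s)_h$ vanishes since $s$ is holomorphic); Cauchy--Schwarz yields $|\dbar|s|^2|(z) \le |s|(z)\,|\nabla s|(z)$, and a product of two tempered quantities is tempered.

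\smallskip

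For the exponential, the key observation is that both summands of $S=-|s|^2+\dbar\xi$ have total degree zero in the grading on $\bB$: $-|s|^2 \in \Omega^{(0,0)}$ has $\sharp = 0$, and $\dbar\xi \in \Omega^{(0,1)}(V^*)$ has $\sharp = 0+1+0-1 = 0$. Since these components are both even, they commute in the graded commutative algebra $\bB$, so
\begin{equation*}
e^S = e^{-|s|^2}\, e^{\dbar\xi} = e^{-|s|^2} \sum_{k=0}^{n} \frac{(\dbar\xi)^k}{k!},
\end{equation*}
a finite sum since $(\dbar\xi)^{n+1}\in \wedge^{n+1}V^*=0$. Each $(\dbar\xi)^k$ is tempered, because a finite wedge product of tempered forms is tempered by the pointwise inequality $|\alpha\wedge\beta|\le D|\alpha||\beta|$ of Lemma \ref{mm} (used inside the proof of Lemma \ref{e}). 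By Lemma \ref{e}, $e^{-|s|^2}$ is rapidly decreasing, and the product of a tempered form with a rapidly decreasing form is rapidly decreasing. Each summand of $e^S$ therefore is rapidly decreasing, and hence so is $e^S$.

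\smallskip

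Finally, for $\dbar e^S$, I would exploit $\dbar^2=0$: since $(\dbar\xi)^k$ is $\dbar$-closed for every $k$, we have $\dbar e^{\dbar\xi}=0$, whence
\begin{equation*}
\dbar e^S \;=\; \dbar\!\left(e^{-|s|^2}\right) e^{\dbar\xi} \;=\; -\,e^{-|s|^2}\bigl(\dbar|s|^2\bigr)\,e^{\dbar\xi}.
\end{equation*}
Expanding $e^{\dbar\xi}$ as a finite sum and applying Lemma \ref{e} term-by-term, each summand is the product of the rapidly decreasing $e^{-|s|^2}$ with the tempered form $(\dbar|s|^2)(\dbar\xi)^k/k!$, so $\dbar e^S$ is rapidly decreasing. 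The only subtle points are the graded-commutativity justifying the factorization $e^{A+B}=e^Ae^B$ and the pointwise metric comparisons $|\xi| = |s|$ and $|\dbar\xi|\lesssim |\nabla s|$; once those are in hand, the rest is a direct application of Lemma \ref{e}.
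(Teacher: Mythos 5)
Your proposal is correct and follows essentially the same route as the paper: reduce $\xi$, $\dbar\xi$, $\dbar|s|^2$ to tempered bounds in terms of $s$ and $\nabla s$, then factor $e^S=e^{-|s|^2}\sum_k (\dbar\xi)^k/k!$ and apply Lemmas~\ref{mm} and~\ref{e}. The only difference is cosmetic: the paper works in a normal holomorphic frame (from \cite{Wu}) at an arbitrary point to get the exact pointwise identities $|\xi|=|s|$, $|\dbar\xi|=|\nabla s|$, and derives $\dbar|s|^2=\langle s,\dbar\xi\rangle$ via formula \eqref{kob} plus Lemma~\ref{inequa1}, whereas you invoke the conjugate-linear isometry $V\to V^*$ and Hermitian compatibility abstractly and are content with inequalities; either version suffices.
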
\black
\begin{proof}
Let $z\in M$ be an arbitrary point. Then by formula 1.20 in page 63 of  \cite{Wu}, there exists a local holomorphic frame $\{e_i\}$ of $V$ around $z$, such that for any $i,j$
$$
(e_i,e_j)(z)=\delta_i^j \and
\nabla e_i(z)=0.
$$
Let $e^j$ be the dual local frame of $V^*$. Locally represent $s=\sum_i s_i e_i$, so $\xi(z)=-\sum_i\overline{s_i}e^i$. \black Then
$$
(\xi,\xi)(z)=\sum_i|s_i|^2=(s,s)(z).
$$
 Varying  $z\in M$ we see $\xi$ is tempered because $s$ is tempered.
Then
$$(\nabla s)(z)=\sum_i (ds_ie_i+s_i\nabla e_i)(z)=\sum_i (ds_i)(z)e_i$$
implies
$$
(\nabla s,\nabla s)(z)=(\sum_i (ds_i)(z)e_i,\sum_j (ds_j)(z)e_j)=\sum_i(ds_i,ds_i)(z).
$$
 Hence
\begin{eqnarray*}
\dbar \xi(z)  &  =   &   -\dbar [ \sum_i(e_i,s) e^i ] (z)=-\sum_i\dbar (e_i,s) (z) e^i\\
&=&-\sum_i(e_i,\nabla s)(z)e^i =-\sum_i   (e_i,\sum_jds_je_j) \black (z)e^i=-\sum_i\overline{ds_i}(z)e^i.
\end{eqnarray*}
Hence we have
$$
(\dbar \xi,\dbar \xi)(z)=(\sum_i\overline{ds_i}(z)e^i,\sum_i\overline{ds_j}(z)e^j)=\sum_i(\overline{ds_i},\overline{ds_i})(z) =\sum_i(ds_i,ds_i)(z)= (\nabla s,\nabla s)(z).
$$
\black
 Varying $z\in M$ we see $\dbar\xi$ is tempered because $\nabla s$ is tempered by assumption.

 Therefore  arbitrary power of $\dbar\xi$ is also tempered. By Lemma \ref{e},   $e^{-|s|^2}$ is rapidly decreasing.  Therefore using Lemma \ref{e}
$$
e^{S}=e^{-|s|^2}(1+\dbar\xi+\frac{(\dbar\xi)^2}{2!}+\cdots+\frac{(\dbar\xi)^n}{n!})\in \oplus_p\Omega^{(0,p)}(\wedge^p V^*)
$$
is rapidly decreasing.

Using the formula \eqref{kob} in Appendix, we have  $$\dbar |s|^2(z)=-\dbar<s,\xi>(z)=<s,\dbar\xi>.$$

  Therefore one has
$(\dbar |s|^2,\dbar |s|^2)\le (s,s)(\dbar\xi,\dbar\xi).
$
 Then  $\dbar |s|^2$ is tempered as   $s$ and $\dbar\xi$ are tempered. Apply Lemma \ref{e} again
$$
\dbar e^S=-\dbar|s|^2 e^{-|s|^2}(1+\dbar\xi+\frac{(\dbar\xi)^2}{2!}+\cdots+\frac{(\dbar\xi)^n}{n!})
$$
is rapidly decreasing, for $\dbar |s|^2$ and $\dbar\xi$ are tempered and $e^{-|s|^2}$ is rapidly decreasing.
\end{proof}

\begin{lemm}\label{integral1}
Let $s$ and $Y$ be in the Assumption \ref{cond}, and $\cT_s$ is defined in section 3. Suppose $\psi$ is a tempered holomorphic section of  $K_M\otimes\det V$, and $\rho$ is a smooth function with compact support.  One can find positive constants $\mu$ and $C_1$ so that $$|(\dbar\rho) \cT_s(\dbar \cT_s)^k(\psi\lrcorner e^S)|(z)\le    C_1|\dbar\rho|e^{-|s|^2}(1+d^2(z,\nu))^{\mu}(z),\ \ \ \forall z\in M\setminus Y.$$
\end{lemm}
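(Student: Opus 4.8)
The plan is to convert the a priori differential operator $\cT_s(\dbar\cT_s)^k$ into a $C^\infty(M)$-linear one, to peel the Gaussian factor $e^{-|s|^2}$ off $\psi\lrcorner e^S$, and then to run an elementary polynomial growth estimate on $M\setminus Y$, where $s$ is large in the sense of Assumption \ref{cond}(3): $|s|^2(z)\ge C_0(1+d^2(z,\nu))$ there, so in particular $|s|^{-1},|s|^{-3}$ are bounded on $M\setminus Y$ and $\bar s=(*,s)_h/(s,s)_h$ is defined there (note $Z\subset T\subset Y$).

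\emph{Rewriting $\cT_s(\dbar\cT_s)^k$.} Since $\cT_s=\iota_{\bar s}$ is contraction with the single covector $\bar s\in\cA^{0,0}(V^*)$, one has $\cT_s^2=0$, hence $\cT_s[\dbar,\cT_s]=\cT_s\dbar\cT_s+\cT_s^2\dbar=\cT_s\dbar\cT_s$. Using in addition that $[\dbar,\cT_s]$ commutes with $\dbar$ and with $\cT_s$ (proof of Lemma \ref{lemmaforquasiiso}), hence with $\dbar\cT_s$, an easy induction along the string $\cT_s\dbar\cT_s\dbar\cT_s\cdots$ gives $\cT_s(\dbar\cT_s)^k=\cT_s[\dbar,\cT_s]^k$ for all $k$. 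A short local computation in a holomorphic frame of $V$ (this is the way $T_\rho$ is built in \cite{LLS}) identifies $[\dbar,\cT_s]=\iota_{\dbar\bar s}$, the $C^\infty(M)$-linear contraction with $\dbar\bar s\in\cA^{0,1}(V^*)$; consequently
$$\cT_s(\dbar\cT_s)^k(\psi\lrcorner e^S)=\iota_{\bar s}\,(\iota_{\dbar\bar s})^{k}(\psi\lrcorner e^S),$$
a composition of $C^\infty$-linear contractions, which moreover vanishes for $k>n$.

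\emph{Peeling off the Gaussian.} By definition $S=-|s|^2+\dbar\xi$ with the scalar $-|s|^2$ central, so (as in the proof of Lemma \ref{good}) $e^S=e^{-|s|^2}\sum_{j=0}^n\tfrac{1}{j!}(\dbar\xi)^j$ and hence $\psi\lrcorner e^S=e^{-|s|^2}\Xi$ with $\Xi:=\psi\lrcorner\big(\sum_{j=0}^n\tfrac{1}{j!}(\dbar\xi)^j\big)$. Lemma \ref{good} gives $\dbar\xi$ tempered, so each power is tempered by Lemma \ref{mm}, and since $\psi$ is tempered by hypothesis the argument of Lemma \ref{con} (via Lemma \ref{inequa2}) shows $\Xi$ is tempered, say $|\Xi|(z)\le C_3(1+d^2(z,\nu))^{m_1}$. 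As $\iota_{\bar s}$ and $\iota_{\dbar\bar s}$ are $C^\infty$-linear, the factor $e^{-|s|^2}$ passes through, and applying the product norm inequality of Lemma \ref{mm} $k+1$ times gives
$$\big|\cT_s(\dbar\cT_s)^k(\psi\lrcorner e^S)\big|(z)\ \le\ D\,e^{-|s|^2(z)}\,|\bar s|(z)\,|\dbar\bar s|(z)^{k}\,|\Xi|(z),$$
with a combinatorial constant $D=D(n,k)$, only $k\le n$ being relevant.

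\emph{Growth on $M\setminus Y$ and conclusion.} On $M\setminus Y$ one has $|\bar s|(z)=1/|s|(z)\le C_0^{-1/2}$, while $\dbar\bar s=-(\dbar\xi)/|s|^2-\xi\,(\dbar|s|^2)/|s|^4$ up to signs; using $|\xi|=|s|$ together with boundedness of $|s|^{-2},|s|^{-3}$ on $M\setminus Y$ this yields $|\dbar\bar s|(z)\le C_0^{-1}|\dbar\xi|(z)+C_0^{-3/2}|\dbar|s|^2|(z)$, which is tempered there because $\dbar\xi$ and $\dbar|s|^2$ are tempered (Lemma \ref{good}); say $|\dbar\bar s|(z)\le C_2(1+d^2(z,\nu))^{m_0}$. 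Substituting into the previous display and taking $\mu:=nm_0+m_1$ gives $|\cT_s(\dbar\cT_s)^k(\psi\lrcorner e^S)|(z)\le C_4\,e^{-|s|^2(z)}(1+d^2(z,\nu))^{\mu}$ uniformly in $k\ge0$, and finally $|(\dbar\rho)\wedge\gamma|(z)\le D'|\dbar\rho|(z)\,|\gamma|(z)$ by Lemma \ref{mm} yields the claim with $C_1:=D'C_4$. The one step needing genuine care, rather than routine bookkeeping, is the algebraic rewriting of the first step: a direct estimate of $\cT_s(\dbar\cT_s)^k$ fails because $\dbar$ of a tempered form need not be tempered, and it is precisely the identification with the $C^\infty$-linear contraction $\iota_{\bar s}(\iota_{\dbar\bar s})^k$ that makes the pointwise norm inequalities of Appendix 2 applicable factor by factor.
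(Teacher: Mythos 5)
Your proof is correct and follows essentially the same route as the paper's: you establish $\cT_s(\dbar\cT_s)^k(\psi\lrcorner e^S)=\psi\lrcorner\bigl(\bar s\wedge(\dbar\bar s)^k\wedge e^S\bigr)$ and then exploit $|s|^{-1}$ being bounded and $\dbar\bar s$ being tempered on $M\setminus Y$. The paper reaches the same intermediate identity by a direct induction on $(\dbar\cT_s)^k(\psi\lrcorner w_i)$ rather than by rewriting the operator as $\iota_{\bar s}(\iota_{\dbar\bar s})^k$ (which requires an extension of the contraction in Appendix~2 to $(0,1)$-form-valued sections of $V^*$ that you state but do not verify against the paper's sign conventions); it also simplifies the product $\bar s\wedge(\dbar\bar s)^k$ explicitly, noting the $\xi\,\dbar|s|^2/|s|^4$ cross terms drop out by $\xi\wedge\xi=0$, whereas you bound $|\dbar\bar s|$ directly without that cancellation -- both are fine, and the estimates are otherwise identical.
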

\begin{proof}  By definition $e^S$ can be written as $e^S=\sum_i w_i$, where $w_i=e^{-|s|^2}\frac{(\dbar\xi)^i}{i!}\in F_M^{i,i}$. First we claim for arbitrary $k\in\NN$ one has
\beq\label{ind}
(\dbar \cT_s)^k(\psi\lrcorner w_i)=\psi\lrcorner((\dbar \bar{s})^k\wedge w_i)-\psi\lrcorner(\bar{s}\wedge(\dbar\bar{s})^{k-1}\wedge\dbar w_i).
\eeq

We prove it by induction. For $k=1$, by Lemma \ref{sign} and Lemma \ref{sign1}
$$ \dbar \cT_s(\psi\lrcorner w_i)=\dbar(\psi\lrcorner (\bar{s}\wedge w_i))
=\psi\lrcorner (\dbar\bar{s}\wedge w_i)-\psi\lrcorner (\bar{s}\wedge\dbar w_i).$$
Assuming \ref{ind} holds for $k=l-1$, then
\begin{eqnarray*} \dbar \cT_s(\dbar \cT_s)^{l-1}(\psi\lrcorner w_i)&=&\dbar \cT_s(\psi\lrcorner((\dbar \bar{s})^{l-1}\wedge w_i)-\psi\lrcorner(\bar{s}\wedge(\dbar\bar{s})^{l-2}\wedge\dbar w_i))\\
&=&\dbar(\psi\lrcorner(\bar{s}\wedge(\dbar \bar{s})^{l-1}\wedge w_i)\\
&=&\psi\lrcorner((\dbar \bar{s})^l\wedge w_i)-\psi\lrcorner(\bar{s}\wedge(\dbar\bar{s})^{l-1}\wedge\dbar w_i).
\end{eqnarray*}

This proves the claim.  Therefore
\begin{eqnarray*}
\cT_s(\dbar \cT_s)^k(\psi\lrcorner w_i)&=&\cT_s(\psi\lrcorner((\dbar \bar{s})^k\wedge w_i)-\psi\lrcorner(\bar{s}\wedge(\dbar\bar{s})^{k-1}\wedge\dbar w_i))\\
&=&\psi\lrcorner(\bar{s}\wedge(\dbar \bar{s})^k\wedge w_i).
\end{eqnarray*}
 Over $M\setminus Z$ we have  $\bar{s}=-\frac{\xi}{|s|^2}$ and thus an identity
\begin{eqnarray*}
\bar{s}\wedge(\dbar \bar{s})^k\wedge w_i&=&(-\frac{\xi}{|s|^2})\wedge(-\frac{\dbar\xi}{|s|^2}+\frac{(\dbar|s|^2)\xi}{|s|^4})^k\wedge w_i\\
&=&(-\frac{\xi}{|s|^2})\wedge(-\frac{\dbar\xi}{|s|^2})^k\wedge w_i\\
&=&(-1)^{k+i+1}(i!)^{-1}|s|^{-2(k+1)}e^{-|s|^2}\xi(\dbar\xi)^{k+i}
\end{eqnarray*}

 Assumption \ref{cond} (3) implies $|s|^2(z)\ge C_0$ for $z\in M\setminus Y$.  Since $\xi$ and $\dbar\xi$ are tempered  by Lemma \ref{good},  so is $\xi(\dbar\xi)^l$. By Lemma \ref{inequa2} there exists a positive number $C'$ independent of $z\in M\setminus Y$, such that
\begin{eqnarray*}
|(\dbar\rho) \cT_s(\dbar \cT_s)^k(\psi\lrcorner e^S)|(z)&\le&\sum_i|(\dbar\rho)\wedge \bl\psi\lrcorner(\bar{s}\wedge(\dbar \bar{s})^k\wedge w_i)\br|(z) \\
&\le& C' \sum_i|\psi||\dbar\rho||\bar{s}\wedge(\dbar \bar{s})^k\wedge w_i|(z)\\
&\le& C_1|\dbar\rho|e^{-|s|^2}(1+d^2(z,\nu))^{\mu}(z).
\end{eqnarray*}   for some positive $\nu$ and $C_1$ independent of $z\in M\setminus Y$ \black
\end{proof}

\begin{lemm}\label{exact-closed}
 $e^{S}$ is $(\dbar+\iota_s)$ -closed, and $1-e^{S}$ is $(\dbar+\iota_s)$-exact.
 \end{lemm}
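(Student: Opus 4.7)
The plan is to exploit the fact that $D := \dbar + \iota_s$ is a square-zero odd derivation of the graded-commutative algebra $\oplus_{p,q}\Omega^{(0,q)}(\wedge^p V^*)$, so that the identity $S = D\xi$ stated in the excerpt produces both the closedness of $e^S$ and an explicit primitive for $1-e^S$ of the form $-\xi\cdot(e^S-1)/S$, where the ``quotient'' is interpreted via the formal power series expansion of $(e^z-1)/z$.

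First I would verify $D^2 = 0$. The three summands to check are $\dbar^2 = 0$, the identity $\iota_s^2 = 0$ (immediate from the antisymmetry of elements of $\wedge^{\ast} V^*$), and the graded anticommutator $\dbar\iota_s + \iota_s\dbar$; working in a local holomorphic frame and using that $s$ is a holomorphic section, this anticommutator equals $\iota_{\dbar s} = 0$. With $D^2 = 0$ in hand, closedness of $e^S$ is immediate: since $S = D\xi$ one has $DS = D^2\xi = 0$, and since $S = -|s|^2 + \dbar\xi$ has total degree $\sharp S = 0$ under the convention $\sharp\alpha = i+j+k-\ell$ recalled in the excerpt, $S$ is of even parity, so the graded Leibniz rule yields $D(S^n) = nS^{n-1}(DS) = 0$ for every $n\ge 1$; summing over $n$ gives $D(e^S) = 0$.

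For the second assertion, I would set
$$\eta \;:=\; -\,\xi\,\sum_{m\ge 0}\frac{S^m}{(m+1)!},$$
which is a well-defined finite sum: the powers of $\dbar\xi$ in the expansion eventually vanish because $\wedge^{n+1}V^* = 0$, and $e^{-|s|^2}$ contributes only a smooth scalar factor. Because $\sharp\xi = -1$ is odd and $D\bigl(\sum_{m\ge 0} S^m/(m+1)!\bigr) = 0$ by the same parity-plus-Leibniz computation, the graded Leibniz rule produces
$$D\eta \;=\; -(D\xi)\sum_{m\ge 0}\frac{S^m}{(m+1)!} \;=\; -S\sum_{m\ge 0}\frac{S^m}{(m+1)!} \;=\; -(e^S-1) \;=\; 1-e^S,$$
establishing exactness. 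The only genuinely delicate point is careful bookkeeping of the odd/even degrees of $\xi$, $S$, and $DS$ under the $\sharp$-grading when invoking the graded Leibniz rule; beyond this the argument reduces to the two identities $\dbar^2 = 0$ and $\dbar s = 0$.
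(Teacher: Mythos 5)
Your proof is correct and takes essentially the same route as the paper: both use $S = (\dbar+\iota_s)\xi$, the square-zero property of $\dbar+\iota_s$, and the graded Leibniz rule, with your explicit primitive $-\xi\sum_{m\ge 0}S^m/(m+1)!$ being precisely the paper's $-\int_0^1 \xi e^{tS}\,dt$ expanded termwise via $\int_0^1 t^m\,dt = 1/(m+1)$.
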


\begin{proof}
The first assertion is from $(\dbar+\iota_s)^2=0$ and the Leibniz rule of $\dbar+\iota_s$. Use  the similar identity $(\dbar+\iota_s)e^{tS}=0$ ($t$ is a variable) and $e^{ x}-1=\int_0^1 xe^{ tx} dt, $
we have
 $$e^{ S}-1=(\bar{\partial}+\iota_{s})\int_0^1 \xi e^{ tS} dt,$$
 and the exactness follows.
 \end{proof}


\black



 Since $e^S$ lies in $\oplus_p\Omega^{(0,p)}(\wedge^p V^*)$, the objects $\psi\lrcorner e^S$ (and hence $\psi\lrcorner(1- e^S)$) lie in $\oplus_p\Omega^{(n,p)}(\wedge^{n-p} V)$, a subspace of $E_M$ defined in previous section.

\begin{lemm}\label{verygood}
If $\psi$ is a holomorphic section of  $K_M\otimes\det V$, then  $\psi\lrcorner(1-e^S) $ is  $\dbar_s$-exact, and $\psi\lrcorner e^{S}$ is $\dbar_s$-closed.

\end{lemm}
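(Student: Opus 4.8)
The plan is to transfer the statements of Lemma~\ref{exact-closed}, which live in the subalgebra $\oplus_{p,q}\Omega^{(0,q)}(\wedge^p V^*)$ equipped with the differential $\dbar+\iota_s$, across the contraction pairing $\psi\lrcorner(\cdot)$ into $E_M$ with the differential $\dbar_s=\dbar+s\wedge$. So the first thing I would establish is the compatibility of $\psi\lrcorner(\cdot)$ with the two differentials: namely that for $u\in F_M^{p,q}$ one has
\[
\dbar_s(\psi\lrcorner u)=\psi\lrcorner\bigl((\dbar+\iota_s)u\bigr),
\]
up to the sign bookkeeping already introduced via Lemma~\ref{sign} and Lemma~\ref{sign1}. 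The point is that since $\psi$ is holomorphic, $\dbar(\psi\lrcorner u)=\psi\lrcorner(\dbar u)$ with the appropriate Koszul sign, and the algebraic identity relating $s\wedge$ on the $\wedge^\ast V$ side to $\iota_s$ on the $\wedge^\ast V^*$ side under the $\det V$-with-$\det V^*$ contraction is exactly what makes $s\wedge(\psi\lrcorner u)=\pm\,\psi\lrcorner(\iota_s u)$. This is the computational heart of the lemma, and it is the step I expect to be the main obstacle — not because it is deep, but because getting the signs to line up between the grading $\sharp$ used on $\bB$, the contraction operators of Appendix~2, and the operator $\dbar_s$ requires care; all the needed sign lemmas are, however, already available in the excerpt.

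Granting that compatibility, the rest is immediate. For $\dbar_s$-closedness: by Lemma~\ref{exact-closed}, $(\dbar+\iota_s)e^S=0$, hence $\dbar_s(\psi\lrcorner e^S)=\psi\lrcorner\bigl((\dbar+\iota_s)e^S\bigr)=0$. For $\dbar_s$-exactness of $\psi\lrcorner(1-e^S)$: again by Lemma~\ref{exact-closed} we may write $1-e^S=(\dbar+\iota_s)\bigl(-\int_0^1\xi e^{tS}\,dt\bigr)$, and therefore
\[
\psi\lrcorner(1-e^S)=\psi\lrcorner\Bigl((\dbar+\iota_s)\bigl(-\textstyle\int_0^1\xi e^{tS}\,dt\bigr)\Bigr)=\dbar_s\Bigl(\psi\lrcorner\bigl(-\textstyle\int_0^1\xi e^{tS}\,dt\bigr)\Bigr),
\]
which exhibits the desired primitive $\sigma=-\psi\lrcorner\int_0^1\xi e^{tS}\,dt\in E_M$. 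One small point worth remarking is that $\psi\lrcorner(\cdot)$ is $C^\infty(M)$-linear and commutes with the finite sums and the $t$-integral defining $e^{tS}$, so pulling it through these operations is harmless; and $\xi e^{tS}$ lies in $\oplus_p\Omega^{(0,p)}(\wedge^p V^*)$ by the same degree reasoning as for $e^S$, so $\psi\lrcorner(\xi e^{tS})$ genuinely lands in $E_M$. This completes the plan.
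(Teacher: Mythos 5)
Your proposal is correct and follows essentially the same route as the paper: both hinge on the intertwining identity $\dbar_s(\psi\lrcorner u)=\psi\lrcorner\bigl((\dbar+\iota_s)u\bigr)$, which for holomorphic $\psi$ falls out of Lemma~\ref{sign} and Lemma~\ref{sign1} without any residual sign, combined with Lemma~\ref{exact-closed}. The only cosmetic difference is that you get $\dbar_s$-closedness of $\psi\lrcorner e^S$ directly from $(\dbar+\iota_s)e^S=0$, whereas the paper derives it from exactness of $\psi\lrcorner(1-e^S)$ together with $\dbar_s\psi=0$ and the decomposition $\psi=\psi\lrcorner e^S+\psi\lrcorner(1-e^S)$; both are equally valid.
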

\begin{proof}
 Denote $e^{tS}=e^{-t|s|^2}\sum_{k=0}^nt^k\beta_k$, where $\beta_k=\frac{(\dbar\xi)^k}{k!}\in F_M^{k,k} $.  By  previous Lemma one may represent   $1-e^S=(\dbar+\iota_s)\omega$ with
 $$\omega=\sum\omega_{p,q}, \qquad \omega_{p,q}\in F_M^{q,p}$$
  where the sum runs over integers $p,q\in [0,n]$.
   By  Lemma \ref{sign} and Lemma \ref{sign1}, we have
$$\psi\lrcorner (\dbar w_{p,q})= \dbar (\psi\lrcorner w_{p,q}) \and
 \psi\lrcorner (\iota_s w_{p,q})= s\wedge (\psi\lrcorner w_{p,q}) .$$
Together we obtain
$$\psi\lrcorner (1-e^S)
=\sum  \psi\lrcorner [(\dbar +\iota_s)  \omega_{p,q}]=  (\dbar+s\wedge)[\sum \psi\lrcorner \omega_{p,q}]
$$
is exact with respect to the operator  $\dbar_s:=\dbar+s\wedge$.

 Using $\psi=\psi\lrcorner e^{S}+\psi\lrcorner(1-e^S)$ and that $\psi$ is $\dbar_s$-closed, we have
$$
\dbar_s(\psi\lrcorner e^{S})=\dbar_s(\psi-\psi\lrcorner(1-e^S))=0
$$
\end{proof}

 \begin{lemm}\label{integral} For each rapidly decreasing $\alpha\in E^{\ast,\ast}_M$ one has
  $$|\int_{M}\alpha _{0,n}|\le \int_M|\alpha |d\text{vol}_M<\infty.$$

 \end{lemm}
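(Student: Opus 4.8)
The plan is to split the statement into an elementary pointwise norm comparison and a convergence estimate, the latter being the place where the polynomial volume bound \eqref{bddvol} and the rapid decrease of $\alpha$ are actually used.

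First I would record the pointwise inequality $|\alpha_{0,n}|(z)\le|\alpha|(z)$ for all $z\in M$, which holds because in the induced metric $(\cdot,\cdot)$ on $\bB$ the homogeneous summands $\Omega^{(n,j)}(\wedge^i V)$ are mutually orthogonal, so $\alpha_{0,n}$ is an orthogonal component of $\alpha$. Since $\alpha_{0,n}$ is an ordinary smooth $(n,n)$-form on $M$, I write $\alpha_{0,n}=f\cdot d\text{vol}_M$ with $f\in C^\infty(M)$ and $d\text{vol}_M$ the Riemannian volume form of $g$; computing in a local unitary coframe (as set up in Appendix 2) one checks $|d\text{vol}_M|\equiv 1$, hence $|f|(z)=|\alpha_{0,n}|(z)\le|\alpha|(z)$. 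Granting $d\text{vol}_M$-integrability of $f$, which is the content of the next step, the triangle inequality for the complex measure $f\, d\text{vol}_M$ yields
\[
\Bigl|\int_M\alpha_{0,n}\Bigr|=\Bigl|\int_M f\, d\text{vol}_M\Bigr|\le\int_M|f|\, d\text{vol}_M\le\int_M|\alpha|\, d\text{vol}_M .
\]

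It then remains to show $\int_M|\alpha|\, d\text{vol}_M<\infty$. Since $\alpha$ is rapidly decreasing, for every $m\ge 0$ there is $C_m>0$ with $|\alpha|(z)\le C_m(1+d^2(z,\nu))^{-m}$. Completeness of $g$ together with Hopf--Rinow makes each ball $B(r)$ compact, so $\int_{B(1)}|\alpha|\, d\text{vol}_M\le\bigl(\sup_{B(1)}|\alpha|\bigr)\,\text{vol}(B(1))<\infty$. I would then decompose $M\setminus B(1)=\bigcup_{k\ge1}\bigl(B(k+1)\setminus B(k)\bigr)$: on the $k$-th annulus $d(z,\nu)>k$, so $(1+d^2(z,\nu))^{-m}\le(1+k^2)^{-m}$, while \eqref{bddvol} gives $\text{vol}(B(k+1)\setminus B(k))\le C(k+1)^\lam$. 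Hence
\[
\int_M|\alpha|\, d\text{vol}_M\le C_m\,\text{vol}(B(1))+C_mC\sum_{k\ge1}(k+1)^\lam(1+k^2)^{-m},
\]
and choosing $m$ with $2m>\lam+1$ (for instance $m=\lceil\lam\rceil+1$) makes the series converge, completing the proof.

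This argument is essentially routine; the one point that needs care is matching the conventions of Appendix 2 — in particular the normalization that the Riemannian volume form has pointwise norm $1$, which is what gives $|\alpha_{0,n}|=|f|$ and hence connects $\int_M\alpha_{0,n}$ to $\int_M|\alpha|\, d\text{vol}_M$. Everything else reduces to the standard triangle inequality for integrals and the elementary convergence of $\sum_k k^{\lam-2m}$ once $2m>\lam+1$.
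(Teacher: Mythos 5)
Your argument is correct and follows essentially the same route as the paper's: both rely on the pointwise comparison $|\alpha_{0,n}|\le|\alpha|$, the triangle inequality applied to the density of $\alpha_{0,n}$ against $d\mathrm{vol}_M$, and an annular decomposition of $M$ combined with the volume bound \eqref{bddvol} and the rapid decrease of $\alpha$ to establish finiteness. The only cosmetic differences are your choice of annuli $B(k+1)\setminus B(k)$ versus the paper's $B(k)\setminus B(k-1)$, and your slightly sharper exponent condition $2m>\lam+1$ where the paper simply takes $l>\lam+2$.
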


\begin{proof} By definition of rapidly decreasing, there exists a constant $D$ and an $l>\lam+2$ ($\lam$ is as in \eqref{bddvol}) such that
$$|\alpha|<D (1+d^2(z,\nu))^{-l} \qquad ,\forall z\in M.$$
Hence
$$|\int_{M}\alpha _{0,n}|\le\int_M|\alpha_{0,n}|d\text{vol}_M\le \int_M|\alpha| d\text{vol}_M\le D\int_M (1+d^2(z,\nu))^{-l}d\text{vol}_M.$$
 Recall $B(\rho):=\{z\in M| d(z,\nu)\le \rho\}$. By \eqref{bddvol} and completeness of $g$ on $M$
\begin{eqnarray*}
\int_M (1+d^2(z,\nu))^{-l}d\text{vol}_M&=&\lim_{k\rightarrow \infty}\int_{B(k)}(1+d^2(z,\nu))^{-l}d\text{vol}_M\\
&=&\sum_k\int_{B(k)\setminus B(k-1)}(1+d^2(z,\nu))^{-l}d\text{vol}_M\\
&\le&\int_{B(1)}(1+d^2(z,\nu))^{-l}d\text{vol}_M+C\sum_{k=2}^{\infty}(1+(k-1)^2)^{-l}k^\lam\\
&<&\infty.
\end{eqnarray*}
 where the last series converges because $l>\lam+2$. \black  This proves the claim.\end{proof}

  Using Proposition \ref{compare} we obtain an exponential type integral presentation of virtual residues.
\begin{theo}
\label{4C} Suppose $s$ satisfies Assumption \ref{cond}.
 Then for each tempered holomorphic section $\psi$ of  $K_M\otimes\det V$, the contraction  $\psi\lrcorner e^{S}\in    \oplus_{p+q=n} \Omega^{n,q}(\wedge^p V)$ is rapidly decreasing, one has
\begin{eqnarray}
\Res\frac{\psi}{s}=\frac{(-1)^n}{(2\pi i)^n}\int_M \psi\lrcorner e^{S}.
\end{eqnarray}
\end{theo}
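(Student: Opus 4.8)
The plan is to combine the compactly-supported comparison result of Proposition \ref{compare} with the decay estimates established in Lemmas \ref{e}, \ref{con}, \ref{good}, \ref{integral1}, \ref{exact-closed}, \ref{verygood} and \ref{integral}. First I would record that $\psi\lrcorner e^S$ is rapidly decreasing: since $\psi$ is tempered and $e^S\in\oplus_p F_M^{p,p}$ is rapidly decreasing (Lemma \ref{good}), the contraction $\psi\lrcorner e^S$ is rapidly decreasing by Lemma \ref{con} (applied componentwise in $p$). In particular $\int_M\psi\lrcorner e^S$ converges by Lemma \ref{integral}, so the right-hand side of the asserted formula makes sense.

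Next I would produce a $\dbar_s$-cohomology decomposition $\psi=\alpha+\dbar_s\sigma$ with $\alpha$ compactly supported, of the shape required to feed into Proposition \ref{compare}, but chosen so that its $(0,n)$-component integrates to $\int_M\psi\lrcorner e^S$ up to sign. The natural candidate is $\sigma = T_\rho$-type correction built from the cut-off $\rho$ and the homotopy operator $\cT_s$ of section 3: explicitly, using Lemma \ref{verygood}, $\psi\lrcorner(1-e^S)$ is $\dbar_s$-exact, say $\psi\lrcorner(1-e^S)=\dbar_s\tau$ with $\tau=\sum_{p,q}\psi\lrcorner\omega_{p,q}$; then $\psi = \psi\lrcorner e^S + \dbar_s\tau$. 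However $\psi\lrcorner e^S$ is only rapidly decreasing, not compactly supported, so I cannot apply Proposition \ref{compare} directly to $\alpha=\psi\lrcorner e^S$. The fix is to cut off: with $\rho$ a smooth cut-off equal to $1$ near $Z$ and compactly supported, set
\begin{equation*}
\sigma = \tau - R_\rho(\psi\lrcorner e^S) = \tau - (1-\rho)\,\cT_s\tfrac{1}{1+[\dbar,\cT_s]}\,j^*(\psi\lrcorner e^S),
\end{equation*}
and $\alpha = \psi - \dbar_s\sigma$. By Lemma \ref{lemmaforquasiiso}, $\dbar_s R_\rho = 1-T_\rho-R_\rho\dbar_s$ (as operators on $E_M$), and since $\psi\lrcorner e^S$ is $\dbar_s$-closed by Lemma \ref{verygood}, this gives $\alpha = T_\rho(\psi\lrcorner e^S)$, which by \eqref{defi-t1} equals $\rho\,(\psi\lrcorner e^S) + (\dbar\rho)\cT_s\tfrac{1}{1+[\dbar,\cT_s]}j^*(\psi\lrcorner e^S)$ and is compactly supported (the first term by compactness of $\mathrm{supp}\,\rho$, the second because $\dbar\rho$ has compact support). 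Hence Proposition \ref{compare} applies and yields $\Res\frac{\psi}{s}=\frac{(-1)^n}{(2\pi i)^n}\int_M\alpha_{0,n}$.

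It then remains to show $\int_M\alpha_{0,n}=\int_M(\psi\lrcorner e^S)_{0,n}$, i.e. that $\int_M\big(\alpha - \psi\lrcorner e^S\big)_{0,n}=0$. We have $\alpha-\psi\lrcorner e^S = (\rho-1)(\psi\lrcorner e^S) + (\dbar\rho)\cT_s\tfrac{1}{1+[\dbar,\cT_s]}j^*(\psi\lrcorner e^S)$. Both of these are supported in $M\setminus U_1$ where $\rho\not\equiv 1$; the point is that their $(0,n)$-parts should be $\dbar$-exact with controlled decay, so that Stokes applies even on the noncompact $M$. Concretely, I would verify that $\alpha-\psi\lrcorner e^S = \dbar_s\big(R_\rho(\psi\lrcorner e^S) - \tau + \tau\big)$ reorganizes — more cleanly: $\psi\lrcorner e^S = \psi - \dbar_s\tau$, so $\alpha - \psi\lrcorner e^S = (\psi-\dbar_s\sigma) - (\psi-\dbar_s\tau) = \dbar_s(\tau-\sigma) = \dbar_s R_\rho(\psi\lrcorner e^S)$, and its $(0,n)$-component is $\dbar\big[(R_\rho(\psi\lrcorner e^S))_{0,n-1}\big] + \big(s\wedge R_\rho(\psi\lrcorner e^S)\big)_{0,n}$; the $s\wedge$ term has no $(0,n)$ piece since $R_\rho(\psi\lrcorner e^S)$ lives in $\oplus_p\Omega^{(n,\ast)}(\wedge^pV)$ with $p\ge 1$ contributing to $s\wedge$, so one tracks bidegrees carefully to see the $(0,n)$-component of $\dbar_s R_\rho(\psi\lrcorner e^S)$ is exactly $\dbar$ of a rapidly decreasing $(n,n-1)$-form. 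By Lemma \ref{integral1} the relevant form $(\dbar\rho)\cT_s(\dbar\cT_s)^k(\psi\lrcorner e^S)$ — and the full $R_\rho(\psi\lrcorner e^S)$ off a compact set — is bounded by $|\dbar\rho|e^{-|s|^2}(1+d^2)^\mu$, hence rapidly decreasing, so by Lemma \ref{integral} together with a Stokes/exhaustion argument (integrate $\dbar$ of a rapidly decreasing form over $B(k)$ and let $k\to\infty$, the boundary term dying by the decay and \eqref{bddvol}) we get $\int_M\big(\dbar_s R_\rho(\psi\lrcorner e^S)\big)_{0,n}=0$.

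The main obstacle I anticipate is the last step: justifying that the integral of a $\dbar$-exact top-form vanishes on the noncompact manifold $M$, which is not automatic and is precisely why the growth hypotheses in Assumption \ref{cond} and the volume bound \eqref{bddvol} were introduced. The delicate point is that the primitive $R_\rho(\psi\lrcorner e^S)$ is a priori only defined and rapidly decreasing on $M\setminus Z$ (it involves $\cT_s=\iota_{\bar s}$ with $\bar s = (*,s)_h/(s,s)_h$, singular on $Z$), whereas we have multiplied by $(1-\rho)$ which vanishes near $Z$ — so one must confirm that $(1-\rho)R_\rho$ extends smoothly across $Z$ by zero and that its $\dbar$ is genuinely exact with rapidly decreasing primitive on all of $M$, and then run the exhaustion-by-balls Stokes argument using Lemma \ref{integral} to kill the boundary terms at infinity. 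Once the bidegree bookkeeping and this noncompact Stokes step are in place, the theorem follows by stringing together Proposition \ref{compare} and the identity $\int_M\alpha_{0,n}=\int_M(\psi\lrcorner e^S)_{0,n}$.
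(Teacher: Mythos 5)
Your proposal is mathematically sound but takes a genuinely different route from the paper's. The paper introduces an \emph{exhaustive sequence} of cut-offs $\rho_j$ (with $|d\rho_j|\le 2^{-j}$, following Demailly), applies Proposition \ref{compare} to each $T_{\rho_j}(\psi\lrcorner e^S)$ to get $\int_M T_{\rho_j}(\psi\lrcorner e^S)=(-2\pi i)^n\Res\frac{\psi}{s}$ for every $j$, and then passes to the limit by constructing an explicit dominating function $G$ and invoking \emph{Lebesgue dominated convergence}. You instead fix a \emph{single} cut-off $\rho$, identify $\alpha-\psi\lrcorner e^S$ with $-\dbar_s R_\rho(\psi\lrcorner e^S)$, observe its $(0,n)$-part is $\dbar$ of a rapidly decreasing $(n,n-1)$-form, and kill the integral by a \emph{noncompact Stokes/Gaffney} argument. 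Your route mirrors what the paper does in Proposition \ref{inde} (where it cites \cite[p.141 Thm]{Gaf}) rather than what it does for Theorem \ref{4C}; it is arguably slightly more economical (one cut-off instead of a sequence), while the paper's route keeps the Gaffney-type input isolated to Proposition \ref{inde}.

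Two small precision points you should tidy up. First, Lemma \ref{integral1} as stated bounds $(\dbar\rho)\cT_s(\dbar\cT_s)^k(\psi\lrcorner e^S)$, which has a $\dbar\rho$ factor and is the correction term inside $T_\rho$, not inside $R_\rho$; to bound $R_\rho(\psi\lrcorner e^S)=(1-\rho)\sum_k(-1)^k\cT_s(\dbar\cT_s)^k(\psi\lrcorner e^S)$ you need the estimate on $\cT_s(\dbar\cT_s)^k(\psi\lrcorner e^S)$ itself, which is established inside the \emph{proof} of Lemma \ref{integral1} (namely $|\cT_s(\dbar\cT_s)^k(\psi\lrcorner e^S)|\lesssim e^{-|s|^2}(1+d^2)^\mu$ on $M\setminus Y$) but not by its statement; you should extract that intermediate inequality explicitly. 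Second, ``the boundary term dying by the decay and \eqref{bddvol}'' is not automatic from $\int_M|\varpi|<\infty$ alone — one gets $\int_{\partial B(r)}|\varpi|\to 0$ only along a subsequence, which is precisely the content handled cleanly by Gaffney's theorem once you also check $\dbar\varpi$ is $L^1$ (it is: $\dbar\varpi$ is the $(n,n)$-part of $\psi\lrcorner e^S - T_\rho(\psi\lrcorner e^S)$, both rapidly decreasing). Cite Gaffney as in Proposition \ref{inde} rather than gesture at an exhaustion; then the argument closes.
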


\begin{proof}  By Lemma \ref{con} and Lemma \ref{good}  $\psi\lrcorner e^{S} $ is rapidly decreasing.  By the completeness of the metric, there exists an exhaustive sequence of compact subsets $\{K_{i}\}$ of $M$, $M=\cup K_i$, and smooth functions $\rho_i$ such that
 \begin{eqnarray*}
 \rho_i=1 \ \ \ \ \ \ \text{in a neighborhood of } \ \ K_{i}, \ \ \ \text{Supp}\, \rho_i\subset K^\circ_{i+1}\\
 0\le\rho_i\le 1\ \ \ \ \text{and}\ \ \ \ |d\rho_i|\le 2^{-i},
 \end{eqnarray*}
see Lemma 2.4 in page 366 of \cite{Dem}.
Choosing $c$ big enough such that $Z\sub Y\subset K_{c}$, where $Y$ is compact as in Assumption \ref{cond}, then $M=\cup_{j\ge c} K_{j}$.
 By the definition of $T_{\rho}$ in \eqref{defi-t1} and Lemma \ref{lemmaforquasiiso}, we have
 $$
 [\dbar_s,R_{\rho_j}](\psi\lrcorner e^{S})=\psi\lrcorner e^{S}-T_{\rho_j}(\psi\lrcorner e^{S})
 $$
 and pointwise convergence
\begin{equation}\label{lim}
\lim_{j\to\infty} T_{\rho_j}(\psi\lrcorner e^{S})=\psi\lrcorner e^{S}.
\end{equation}
Thus we may write
$$\psi=T_{\rho_j}(\psi\lrcorner e^{S})+\dbar_s (R_{\rho_j}(\psi\lrcorner e^{S}))+ \psi\lrcorner (1-e^S) ,$$
where  $\psi\lrcorner (1-e^S)$ is also $\dbar_s$-exact by Lemma \ref{verygood}
and $T_{\rho_j}(\psi\lrcorner e^{S})$ is compactly supported by definition \eqref{defi-t1}.
 Apply Proposition \ref{compare}
$$
\int_M T_{\rho_j}(\psi\lrcorner e^{S})=(-2\pi i)^n\Res\frac{\psi}{s}.
$$

 Using  constants $\mu,C_1$ in Lemma \ref{integral1}, we define a smooth positive function on $M$
  $$G(z)=|\psi\lrcorner e^S|+(n+1) C_1e^{-|s|^2}(1+d^2(z,\nu))^{\mu}(z).$$

 By Lemma \ref{con} and Lemma \ref{good}, $\psi\lrcorner e^{S}$ is rapidly decreasing.   Because $e^{-|s|^2}$ is rapidly decreasing and $(1+d^2(z,\nu))^{\mu}$ is tempered, we know $G(z)$ is also rapidly decreasing. \black  By Lemma \ref{integral}
  $$
\int_M G(z)\text{dvol}_M<\infty.
$$    Therefore $G(z)\in L^1(M)$, where $L^1(M)$ is the function space with the norm  $||\beta||:=\int_M|\beta|d\text{vol}_M$ (c.f. the definition in \cite[page 288]{Dem}).


 Recall in  definition \eqref{defi-t1} $\cT_s[\dbar,\cT_s]^k=\cT_s(\dbar \cT_s)^k$ because $\cT_s^2=0$. Therefore
 $$T_{\rho_j}(\psi\lrcorner e^{S})=\rho_j(\psi\lrcorner e^{S})+(\dbar\rho_j)\sum_{k=0}^n (-1)^k \cT_s(\dbar \cT_s)^k(\psi\lrcorner e^S).$$

  Take absolute value and use Lemma \ref{integral1} one sees at  arbitrary $z\in M\setminus Y$\black
\begin{eqnarray*}
|T_{\rho_j}(\psi\lrcorner e^{S})|
&\le&|\psi\lrcorner e^{S}|+(n+1) C_1|\dbar\rho_j|e^{-|s|^2}(1+d^2(z,\nu))^{\mu}(z)\le G(z).
\end{eqnarray*}
When $z\in Y$ one has $\dbar \rho_j(z)=0$ because $Y\sub K_c$. Thus the same inequality holds for arbitrary $z\in M$. Then by Lemma \ref{integral},   \eqref{lim} and Lebesgue dominated convergence theorem \cite[page 376]{Roy} \black
,  we have
 $$
\int_M(\psi\lrcorner e^{S})=\lim_{j\to\infty}\int_M T_{\rho_j}(\psi\lrcorner e^{S}) \black =(-2\pi i)^n\Res\frac{\psi}{s}.
 $$
\end{proof}

\begin{prop}\label{inde}
	Suppose $\psi$ is a tempered holomorphic section of  $K_M\otimes\det V$, and $s$ satisfies the Assumption \ref{cond}. Then 
	we have that $\int_M (\psi\lrcorner e^{tS})$ is  independent of $t$ for $t>0$.
\end{prop}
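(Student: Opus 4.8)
The plan is to prove the stronger assertion that $I(t):=\int_M(\psi\lrcorner e^{tS})_{0,n}$ is constant for $t\in(0,\infty)$, by exhibiting $\psi\lrcorner(e^{tS}-e^{t'S})$ as $\dbar_s$ of a \emph{rapidly decreasing} form and integrating. (As in Theorem \ref{4C}, ``$\int_M$'' of an element of $E_M$ means the integral of its $E^{0,n}_M=\Omega^{(n,n)}_M$ component.) The first step is the parametrized primitive identity
$$e^{tS}-e^{t'S}=(\dbar+\iota_s)\Bigl(\int_{t'}^{t}\xi\,e^{\tau S}\,d\tau\Bigr),\qquad t,t'>0.$$
Since $\tau S=(\dbar+\iota_s)(\tau\xi)$, the argument of Lemma \ref{exact-closed} gives $(\dbar+\iota_s)e^{\tau S}=0$ for all $\tau$; as $S$ has total degree $0$ one has $\frac{d}{d\tau}e^{\tau S}=Se^{\tau S}$, and the Leibniz rule for $\dbar+\iota_s$ together with $S=(\dbar+\iota_s)\xi$ and $(\dbar+\iota_s)e^{\tau S}=0$ yields $\frac{d}{d\tau}e^{\tau S}=(\dbar+\iota_s)(\xi e^{\tau S})$; integrating over $\tau$ between $t'$ and $t$ and pulling the ($\tau$-independent, continuous linear) operator $\dbar+\iota_s$ out of the integral gives the displayed formula. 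Applying $\psi\lrcorner$ and the intertwining relations $\psi\lrcorner(\dbar w)=\dbar(\psi\lrcorner w)$, $\psi\lrcorner(\iota_s w)=s\wedge(\psi\lrcorner w)$ of Lemmas \ref{sign} and \ref{sign1} (exactly as in the proof of Lemma \ref{verygood}), one gets $\psi\lrcorner(e^{tS}-e^{t'S})=\dbar_s\sigma$ with $\sigma:=\psi\lrcorner\int_{t'}^{t}\xi e^{\tau S}\,d\tau\in E_M$.

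Next I would check $\sigma$ is rapidly decreasing. For $\tau$ ranging over the compact interval with endpoints $t',t$, all inside $(0,\infty)$, Lemma \ref{good} shows $e^{\tau S}$ is rapidly decreasing uniformly in $\tau$; $\xi$ is tempered, so $\xi e^{\tau S}$ is rapidly decreasing uniformly in $\tau$ by Lemma \ref{e}, hence so is $\int_{t'}^{t}\xi e^{\tau S}\,d\tau$, and then $\sigma$ is rapidly decreasing by Lemma \ref{con}; likewise $\dbar_s\sigma=\psi\lrcorner(e^{tS}-e^{t'S})$ is rapidly decreasing. Writing $\gamma:=\sigma_{0,n-1}\in\Omega^{(n,n-1)}_M$, the $(0,n)$-component of $\dbar_s\sigma$ is $\dbar\gamma=d\gamma$ (because $s\wedge$ raises the $\wedge^\bullet V$-degree, so there is no contribution from $E^{-1,n}_M$, and $\partial\gamma$ is of type $(n+1,n-1)=0$), and both $\gamma$ and $\dbar\gamma$ lie in $L^1(M,d\mathrm{vol})$ by Lemma \ref{integral}.

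Then Stokes on the complete manifold gives $\int_M d\gamma=0$: using an exhaustion $M=\bigcup_j K_j$ and cutoffs $\rho_j\equiv1$ near $K_j$ with $\mathrm{Supp}\,\rho_j\subset K^\circ_{j+1}$ and $|d\rho_j|\le 2^{-j}$ (exactly as in the proof of Theorem \ref{4C}), one has $\int_M\rho_j\,d\gamma=-\int_M d\rho_j\wedge\gamma$, whose left side tends to $\int_M d\gamma$ by dominated convergence and whose right side tends to $0$ since $|d\rho_j\wedge\gamma|\le 2^{-j}|\gamma|$ with $|\gamma|\in L^1$. Therefore $I(t)-I(t')=\int_M\bigl(\psi\lrcorner(e^{tS}-e^{t'S})\bigr)_{0,n}=\int_M d\gamma=0$, so $I$ is constant on $(0,\infty)$; by Theorem \ref{4C} its value is $(-2\pi i)^n\Res\frac{\psi}{s}$.

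The only genuinely non-formal ingredient is the vanishing $\int_M d\gamma=0$ for rapidly decreasing $\gamma$ on the non-compact $M$, which is the same cutoff/Stokes argument already used in the proof of Theorem \ref{4C}; everything else is algebra (Leibniz for $\dbar+\iota_s$ and the contraction identities of Lemmas \ref{sign}, \ref{sign1}) plus the decay bookkeeping of Lemmas \ref{good}, \ref{e}, \ref{con}, \ref{integral}. The hypothesis $t>0$ is used in exactly one place, and essentially: it is what makes the primitive $\int_{t'}^{t}\xi e^{\tau S}\,d\tau$ rapidly decreasing, since $e^{\tau S}$ fails to decay as $\tau\to 0$ (where $e^{0\cdot S}=1$).
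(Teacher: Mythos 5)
Your proof is correct, but it takes a genuinely different route from the paper's. The paper differentiates under the integral sign: it shows $\frac{d}{dt}(\psi\lrcorner e^{tS})=\dbar_s(\psi\lrcorner(\xi e^{tS}))$, then appeals to Gaffney's $L^1$-Stokes theorem for complete Riemannian manifolds \cite{Gaf} to conclude $\int_M\dbar_s(\psi\lrcorner(\xi e^{tS}))=0$, and finally spends a separate paragraph justifying the interchange $\frac{d}{dt}\int_M=\int_M\frac{d}{dt}$ via a uniform dominated-convergence bound $C_2\,e^{-\frac{t_0}{2}|s|^2}$. You instead work with the integrated identity $e^{tS}-e^{t'S}=(\dbar+\iota_s)\int_{t'}^{t}\xi e^{\tau S}\,d\tau$ and prove $\int_M d\gamma=0$ by the same cutoff exhaustion already used in Theorem \ref{4C}, avoiding both the citation of Gaffney and the differentiation-under-the-integral step entirely. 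Both routes rest on the same decay bookkeeping (Lemmas \ref{e}, \ref{con}, \ref{good}, \ref{integral}, \ref{sign}, \ref{sign1}); your version is arguably more self-contained (no external Stokes theorem) and eliminates one analytic interchange, at the small cost of having to verify uniformity of rapid decrease of $e^{\tau S}$ over the compact $\tau$-interval $[t',t]\subset(0,\infty)$, which you correctly observe follows from $e^{-\tau|s|^2}\le e^{-t'|s|^2}$ and $\tau^k\le t^k$. Your closing remark about why $t>0$ is needed (the primitive $\int_{t'}^{t}\xi e^{\tau S}\,d\tau$ loses decay as $t'\to 0$) is also the right explanation.
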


\begin{proof} 
By Lemma \ref{con} and Lemma \ref{good}, $\psi\lrcorner e^{tS}$ is rapidly decreasing for $t>0$. Therefore
$\int_M (\psi\lrcorner e^{tS})$ is finite for $t>0$ by Lemma \ref{integral}.
  
Applying Leibniz rule to $\dbar +\iota_s$ and Lemma \ref{exact-closed} one has
	$$
	\frac{d(\psi\lrcorner e^{tS})}{d t}=\psi\lrcorner (S e^{tS})=\psi\lrcorner (((\dbar+\iota_s)\xi) e^{tS}) =\psi\lrcorner ((\dbar+\iota_s)(\xi e^{tS})).
	$$
	  Lemma \ref{sign} and Lemma \ref{sign1} imply that
	$$
	\psi\lrcorner ((\dbar+\iota_s)(\xi e^{tS}))
	=\dbar_s(\psi\lrcorner (\xi e^{tS})).$$
	Note that here $\psi\lrcorner (\xi e^{tS})$ is in $\oplus_q \Omega^{(n,q)}(\wedge^{n-1-q}V^*)$.

	By Lemma \ref{good}, $\xi,\dbar\xi$ are  tempered, and $e^{tS}$ is rapidly decreasing. Then Lemma \ref{e} implies $\xi e^{tS}$ is rapidly decreasing and that
	$$(\dbar+\iota_s)(\xi e^{tS})=((\dbar+\iota_s)(\xi)) e^{tS}=(\dbar\xi) e^{tS}-|s|^2e^{tS}$$
	is also rapidly decreasing. Therefore Lemma \ref{con} shows  $\psi\lrcorner(\xi e^{tS})$ and $\psi\lrcorner((\dbar+\iota_s)(\xi e^{tS}))$ are rapidly decreasing. Using Lemma \ref{integral}, we have $$\int_{M}|\psi\lrcorner(\xi e^{tS})|d\text{vol}_M<\infty \and \int_{M}|\psi\lrcorner((\dbar+\iota_s)(\xi e^{tS}))|d\text{vol}_M<\infty.$$  Therefore for  $\varpi$ to be the
	component of $\psi\lrcorner (\xi e^{tS})$ in $\Omega^{(n,n-1)}$ one has
	$$\int_M|\varpi|d\text{vol}_M\leq \int_{M}|\psi\lrcorner(\xi e^{tS})|d\text{vol}_M<\infty,$$
	
	$$\int_M |d\varpi|d\text{vol}_M=\int_M |\dbar\varpi|d\text{vol}_M\leq
	\int_M |\dbar_s (\psi\lrcorner (\xi e^{tS})    ) |d\text{vol}_M=
	 \int_{M}|\psi\lrcorner((\dbar+\iota_s)(\xi e^{tS}))|d\text{vol}_M<\infty.$$
	
	Apply \cite[p141 Thm]{Gaf} one concludes $\int_M d\varpi=0$, and thus $\int_M d(\psi\lrcorner (\xi e^{tS}))=0$. Using that  $s\wedge(\psi\lrcorner (\xi e^{tS}))$ has no component in $\Omega^{(n,n)}$,  we have 
		$$   \int_M \frac{d( \psi\lrcorner  e^{tS}) }{dt} =\int_M\dbar_s(\psi\lrcorner( \xi e^{tS}))=\int_M\dbar\varpi=\int_M d\varpi=0.$$

 We finally claim $\frac{d\int_M( \psi\lrcorner  e^{tS}) }{dt}=\int_M \frac{d( \psi\lrcorner  e^{tS}) }{dt}$. One first writes $e^{tS}=e^{-t|s|^2}\sum_{k=0}^nt^k\beta_k$ with $\beta_k=\frac{(\dbar\xi)^k}{k!}\in F^{k,k}_{M} $. Because $e^{-t|s|^2}\beta_k$ is rapidly decreasing and $\dbar \xi,|s|^2$ are tempered,  $e^{-t|s|^2}(\dbar\xi)\beta_{k}$ and $e^{-t|s|^2}|s|^2\beta_{k}$ are rapidly decreasing. As $\psi\lrcorner$
  preserves the rapidly-decreasing property,  there exists a constant $C_2$ such that, for $t>t_0>0$
\begin{eqnarray*}
|\frac{d(\psi\lrcorner e^{tS})}{d t}|&=&e^{-t|s|^2}|-\sum_{k=0}^nt^{k}|s|^2
(\psi\lrcorner\beta_k)
+\sum_{k=1}^nt^{k-1}\psi\lrcorner((\dbar\xi)\beta_{k-1})|\\
&\le&C_2  e^{-\frac{t}{2}|s|^2} \le C_2  e^{-\frac{t_0}{2}|s|^2},
\end{eqnarray*}
where $e^{-\frac{t_0}{2}|s|^2}$ is in $L^1(M)$ (integrable). This implies $$\frac{d\int_M\psi\lrcorner  e^{tS} }{dt} =  \int_M \frac{d( \psi\lrcorner  e^{tS}) }{dt}=0.$$
\end{proof}

\begin{coro} Under the same condition and for $t>0$ let  $s_t=t\cdot s$
associates $\xi_t,S_t$ as in \eqref{S}, then $\int_M \psi\lrcorner e^{S_t}=t^{-n}\int_M \psi\lrcorner e^S$.
\end{coro}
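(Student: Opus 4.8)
The plan is to reduce the statement to the $t$-independence already established in Proposition \ref{inde}, keeping careful track of how the rescaling $s_t = ts$ affects the ingredients of $S_t$. First I would record that, since $t>0$ is real, $\xi_t = -(*,s_t)_h = t\,\xi$, hence $\iota_{s_t}\xi_t = t^2\iota_s\xi = -t^2|s|^2$ and $\dbar\xi_t = t\,\dbar\xi$, so by \eqref{S}
$$S_t = -t^2|s|^2 + t\,\dbar\xi,\qquad e^{S_t} = e^{-t^2|s|^2}\sum_{k=0}^n t^k\beta_k,\quad \beta_k := \tfrac{(\dbar\xi)^k}{k!}\in F_M^{k,k}.$$
One checks at once that $s_t$ still satisfies Assumption \ref{cond} (the bounds are scale invariant: $|s_t|^2 = t^2|s|^2$ and $\nabla s_t = t\,\nabla s$), so Lemma \ref{con} and Lemma \ref{good} apply and $\psi\lrcorner e^{S_t}$ is rapidly decreasing; in particular $\int_M\psi\lrcorner e^{S_t}$ is well defined by Lemma \ref{integral}.

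Next I would isolate the only term contributing to the trace. Since $\psi\lrcorner$ maps $F_M^{p,q}$ into $E_M^{n-p,q}=\Omega^{(n,q)}(\wedge^{n-p}V)$, the component $(\psi\lrcorner e^{S_t})_{0,n}$ in $\Omega^{(n,n)}_M$ comes only from the $k=n$ summand, giving $\int_M\psi\lrcorner e^{S_t} = \tfrac{t^n}{n!}\int_M e^{-t^2|s|^2}\,\psi\lrcorner(\dbar\xi)^n$. The same computation applied to $e^{tS}=e^{-t|s|^2}\sum_k t^k\beta_k$ (for the original $s$) gives $\int_M\psi\lrcorner e^{tS} = \tfrac{t^n}{n!}\int_M e^{-t|s|^2}\psi\lrcorner(\dbar\xi)^n$. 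Putting $g(r):=\int_M e^{-r|s|^2}\,\psi\lrcorner(\dbar\xi)^n$ for $r>0$, Proposition \ref{inde} says $\tfrac{t^n}{n!}g(t)$ is independent of $t$, hence $g(r)=r^{-n}g(1)$, and from the $k=n$ analysis at $t=1$ we have $g(1)=n!\int_M\psi\lrcorner e^S$. Substituting $r=t^2$,
$$\int_M\psi\lrcorner e^{S_t} = \frac{t^n}{n!}\,g(t^2) = \frac{t^n}{n!}\,t^{-2n}g(1) = t^{-n}\int_M\psi\lrcorner e^S,$$
which is the assertion.

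There is no genuine obstacle here beyond bookkeeping; the one point requiring care is distinguishing the two sources of powers of $t$ — the factor $t^n$ from $(\dbar\xi_t)^n = t^n(\dbar\xi)^n$ versus the factor $t^2$ in the Gaussian weight $e^{-t^2|s|^2}$ — since $S_t$ is genuinely not a scalar multiple of $S$, so Proposition \ref{inde} cannot be quoted verbatim. As a consistency check one may instead argue through Theorem \ref{4C}: $\int_M\psi\lrcorner e^{S_t} = (-2\pi i)^n\Res\frac{\psi}{s_t}$, and then verify directly from the Koszul construction in the proof of Proposition \ref{compare} that $\Res\frac{\psi}{ts} = t^{-n}\Res\frac{\psi}{s}$, because $\bar s_t = \tfrac1t\bar s$ forces $\beta_{n-1}^{(t)} = t^{-n}\beta_{n-1}$; this recovers the same scaling and matches the classical fact that replacing each $f_i$ by $tf_i$ multiplies a Grothendieck residue by $t^{-n}$.
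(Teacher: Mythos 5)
Your proof is correct and follows essentially the same route as the paper: the key observation in both cases is that the $(n,n)$-component of $\psi\lrcorner e^{S_t}$ equals $t^{-n}$ times that of $\psi\lrcorner e^{t^2 S}$, after which Proposition \ref{inde} (applied with parameter $t^2$) finishes the argument. Your introduction of the auxiliary function $g(r)=\int_M e^{-r|s|^2}\,\psi\lrcorner(\dbar\xi)^n$ is merely a more explicit bookkeeping device for the same computation, and your final consistency check via $\Res\frac{\psi}{ts}=t^{-n}\Res\frac{\psi}{s}$ matches the remark the paper makes immediately after the corollary.
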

\begin{proof} One checks $(\psi \lrcorner e^{S_t})_{n,n}=t^{-n}(\psi\lrcorner e^{t^2 S})_{n,n}$, and then applies the previous proposition.
\end{proof}

  The identity corresponds to $Res\frac{\psi}{ts}=\frac{1}{t^n}Res\frac{\psi}{s}$ for $n=\dim M$ (residue taken near compact connected components of $(s=0)$).

 \section{Appendix:  operators and metrics on exterior algebra $\bB$}

 Let $V$ be a rank $n$ holomorphic bundle over a complex manifold $M$.  Recall in section three  $\bB:=\oplus_{i,j,k,l}\Omega^{(i,j)}(\wedge^k V \otimes\wedge^l V^*)$ is a graded commutative algebra  extending the wedge products of $\Omega^{\ast}, \wedge^\ast V$ and $\wedge^\ast V^\ast$. 
 The degree of $\alpha\in \Omega^{(i,j)}(\wedge^k V \otimes\wedge^l V^*)$ is   $\sharp\alpha:=i+j+k-l$.  We brief $A^0(\wedge^k V \otimes\wedge^l V^*)=\Omega^{(0,0)}(\wedge^k V \otimes \wedge^l V^*)$.\black 

   Set  $\kappa:\bB\to \Omega\sta$
which sends $\omega (e\otimes e')$(for $\omega\in\Omega^{(i,j)}, e\in \wedge^k V, e'\in \wedge^\ell V\sta)$ to $\omega \langle e,e'\rangle$, where $\langle,\rangle$
is the dual pairing between $\wedge^k V,\wedge^k V\sta$  and $\langle e,e'\rangle =0$ when $k\neq \ell$. We further extend the pairing by setting  $\langle\alpha,\beta\rangle :=\kappa(\alpha \beta)$  for $\alpha,\beta\in \bB$.
It is direct to verify \beq\label{kob}\dbar\langle\alpha,\beta\rangle =\langle\dbar\alpha,\beta\rangle +(-1)^{\sharp \alpha}\langle\alpha,\dbar\beta\rangle .\eeq

 We now define three different types of contraction maps.
Given   $u\in \Omega^{(i,j)}(\wedge^k V)$ and
 $k\geq \ell$,  we   define
 \beq\label{operator1} u \lrcorner: \Omega^{(p,q)}(\wedge^{l}V^*)\lra \Omega^{(p+i,q+j)}(\wedge^{k-l}V) \eeq
where for $\theta\in \Omega^{(p,q)}(\wedge^{\ell}V^*)$, the $u\lrcorner \theta$ is determined by
$$\langle u\lrcorner \theta,\nu\sta\rangle =(-1)^{(i+j)l+(p+q)\sharp u+\frac{l(l-1)}{2}}\langle u,\theta\wedge \nu^*\rangle ,\qquad \forall \nu^*\in A^0(\wedge^{k-l}V^*).$$

 Given $\alpha\in A^0(V)$, we define
\beq\label{operator2}
\iota_{\alpha} : \Omega^{(i,j)}(\wedge^{k}V^*) \lra \Omega^{(i,j)}(\wedge^{k-1}V^*)\eeq
where for $w\in  \Omega^{(i,j)}(\wedge^{k}V^*)$, the $\iota_{\alpha}(w)$ is determined by
 $$\langle \nu,\iota_{\alpha}(w)\rangle =\langle \alpha\wedge \nu,w\rangle ,\qquad \forall \nu \in A^0(\wedge^{k-1}V).$$

 For above $\alpha$, $\theta$ and $w$ one has
 $\iota_\alpha(w\wedge \theta)=\iota_\alpha(w)\wedge\theta +(-1)^{\sharp w}w\wedge \iota_\alpha(\theta).$
 Given $\gamma\in A^0(V^*)$, we also  define
\beq\label{operator2'}
\iota_{\gamma} : \Omega^{(i,j)}(\wedge^{k}V) \lra  \Omega^{(i,j)}(\wedge^{k-1}V),\eeq

 for $v\in  \Omega^{(i,j)}(\wedge^{k}V)$, the $\iota_{\gamma}(v)$ is determined by
 $$\langle \iota_{\gamma}(v),w\rangle =(-1)^{i+j}\langle v,\gamma\wedge w\rangle , \qquad \forall w\in A^0(\wedge^{k-1}V^*).$$

 We have the following identities. Because the proofs of the identities are standard, we omit them here.\black

\begin{lemm}\label{sign}
Given $u\in \Omega^{(i,j)}(\wedge^n V)$,  and $\theta,\alpha, \gamma$ as above, one has
$$
\alpha \wedge(u\lrcorner \theta)=u\lrcorner(\iota_\alpha (\theta) )\and
\iota_{\gamma}(u\lrcorner \theta)=u\lrcorner(\gamma\wedge \theta).
$$

\end{lemm}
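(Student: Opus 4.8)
The plan is to verify each of the two identities by pairing both sides against an arbitrary $A^0(\cdot)$-valued test section of the relevant exterior degree and invoking nondegeneracy of the fibrewise pairing $\langle\cdot,\cdot\rangle$ between $\wedge^mV$ and $\wedge^mV^*$. For the first identity both $\alpha\wedge(u\lrcorner\theta)$ and $u\lrcorner(\iota_\alpha\theta)$ lie in $\Omega^{(p+i,q+j)}(\wedge^{\,n-\ell+1}V)$, so it suffices to check $\langle\alpha\wedge(u\lrcorner\theta),\nu^*\rangle=\langle u\lrcorner(\iota_\alpha\theta),\nu^*\rangle$ for every $\nu^*\in A^0(\wedge^{\,n-\ell+1}V^*)$; for the second both $\iota_\gamma(u\lrcorner\theta)$ and $u\lrcorner(\gamma\wedge\theta)$ lie in $\Omega^{(p+i,q+j)}(\wedge^{\,n-\ell-1}V)$, so it suffices to test against every $w\in A^0(\wedge^{\,n-\ell-1}V^*)$. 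Since $\wedge\alpha$, $\iota_\alpha$, $\iota_\gamma$ and $u\lrcorner$ are all $A^0(\cdot)$-linear in the relevant slot, this reduction is legitimate; alternatively one could expand everything in a local holomorphic frame of $V$ and its dual, which amounts to the same bookkeeping.

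\textbf{First identity.} On the left I would move $\alpha$ across the pairing: graded commutativity of $\bB$ together with the defining relation of $\iota_\alpha$ in \eqref{operator2} yields, for $v=u\lrcorner\theta$, the form-valued adjunction $\langle\alpha\wedge v,\nu^*\rangle=(-1)^{(p+i)+(q+j)}\langle v,\iota_\alpha\nu^*\rangle$, and then \eqref{operator1} applied to $\theta$ (with test section $\iota_\alpha\nu^*\in A^0(\wedge^{n-\ell}V^*)$) rewrites $\langle u\lrcorner\theta,\iota_\alpha\nu^*\rangle$ as a sign times $\langle u,\theta\wedge\iota_\alpha\nu^*\rangle$. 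On the right, \eqref{operator1} applied directly to $\iota_\alpha\theta$ (now with index $\ell-1$, so the normalizing exponent changes from $\frac{\ell(\ell-1)}{2}$ to $\frac{(\ell-1)(\ell-2)}{2}$) rewrites $\langle u\lrcorner(\iota_\alpha\theta),\nu^*\rangle$ as a sign times $\langle u,\iota_\alpha\theta\wedge\nu^*\rangle$. The two are reconciled by $\theta\wedge\iota_\alpha\nu^*=(-1)^{\sharp\theta+1}\,\iota_\alpha\theta\wedge\nu^*$, which follows from the Leibniz rule for $\iota_\alpha$ stated above applied to $\iota_\alpha(\theta\wedge\nu^*)$ together with the vanishing $\theta\wedge\nu^*\in\wedge^{n+1}V^*=0$. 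After substituting, the asserted equality reduces to a congruence among the exponents of $-1$, which comes out (it boils down to $2(i+j)+2\ell\equiv0\pmod 2$).

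\textbf{Second identity.} This is the same in spirit and in fact needs no Leibniz rule. Pairing $\iota_\gamma(u\lrcorner\theta)$ against $w$, the defining relation of $\iota_\gamma$ in \eqref{operator2'} gives $\langle\iota_\gamma(u\lrcorner\theta),w\rangle=(-1)^{(p+i)+(q+j)}\langle u\lrcorner\theta,\gamma\wedge w\rangle$, and then \eqref{operator1} (index $\ell$, test section $\gamma\wedge w\in A^0(\wedge^{n-\ell}V^*)$) turns this into a sign times $\langle u,\theta\wedge\gamma\wedge w\rangle$. Pairing $u\lrcorner(\gamma\wedge\theta)$ against $w$, \eqref{operator1} with index $\ell+1$ turns it into a sign times $\langle u,(\gamma\wedge\theta)\wedge w\rangle$. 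Since $(\gamma\wedge\theta)\wedge w=\gamma\wedge\theta\wedge w$ and $\theta\wedge\gamma\wedge w=(-1)^{\sharp\theta\,\sharp\gamma}\gamma\wedge\theta\wedge w$ by associativity and graded commutativity (with $\sharp\gamma=-1$), the comparison is again just a parity check of the accumulated exponents, using $\frac{(\ell+1)\ell}{2}-\frac{\ell(\ell-1)}{2}=\ell$; it reduces to $2(p+q)\equiv0\pmod 2$.

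\textbf{The hard part.} The only genuine work here is the sign bookkeeping; there is no conceptual content beyond the adjointness of $\wedge\alpha$ (resp. $\gamma\wedge$) with $\iota_\alpha$ (resp. $\iota_\gamma$) and the normalization built into \eqref{operator1}. To keep the write-up disciplined I would first record, once and for all, the transposition rule $\langle\beta,\gamma'\rangle=(-1)^{\sharp\beta\,\sharp\gamma'}\langle\gamma',\beta\rangle$ coming from graded commutativity of $\bB$, and the two form-valued adjunction formulas used above with their signs spelled out; after that both identities become one-line parity computations. I expect the main pitfall to be tracking which bidegree enters each sign — that of $u$, of $\theta$, or of $u\lrcorner\theta$ — and the hypothesis $u\in\Omega^{(i,j)}(\wedge^nV)$ with $n=\rank V$ is used precisely to guarantee that the intermediate products $\theta\wedge\iota_\alpha\nu^*$, $\iota_\alpha\theta\wedge\nu^*$, $\theta\wedge\gamma\wedge w$, $(\gamma\wedge\theta)\wedge w$ land in $\wedge^nV^*$ (so they pair nondegenerately with $u$) and, in the first identity, to force $\theta\wedge\nu^*=0$ so that the Leibniz rule collapses to the single relation employed above.
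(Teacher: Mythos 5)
Your proof strategy is essentially identical to the paper's: pair both sides against an arbitrary test section in the appropriate $A^0(\wedge^\bullet V^*)$, use the adjunctions built into \eqref{operator1}, \eqref{operator2}, \eqref{operator2'} together with the Leibniz rule for $\iota_\alpha$ (and the vanishing $\theta\wedge\nu^*=0$ in $\wedge^{n+1}V^*$), then verify that the accumulated exponents agree modulo 2. The only quibble is that for the first identity the final parity check actually collapses to $2(i+j)+2(p+q)\equiv 0$, not $2(i+j)+2\ell$ as you wrote; this does not affect the validity of the argument since both are even.
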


\begin{lemm}\label{sign1}
 For   $u\in\Omega^{(i,j)} (\wedge^k V), \theta\in \Omega^{(p,q)} (\wedge^\ell V\sta)$, $k\ge l$ and smooth form $\alpha\in\Omega^{(a,b)}(M)$, we have
 $$\alpha \wedge (u\lrcorner  \theta)= u\lrcorner (\alpha \theta)\and   \dbar (u\lrcorner \theta)=(-1)^{\sharp \theta}(\dbar u)\lrcorner \theta + u \lrcorner (\dbar \theta).$$
\end{lemm}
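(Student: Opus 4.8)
The plan is to verify both identities by pairing against an arbitrary test element $w\in A^0(\wedge^{k-\ell}V^*)$ and using the defining relation for $u\lrcorner(\cdot)$ from \eqref{operator1}, exactly as in the proof of Lemma \ref{sign}. For the first identity, both sides lie in $\Omega^{(p+i+a,q+j+b)}(\wedge^{k-\ell}V)$, so it suffices to check that $\langle \alpha\wedge(u\lrcorner\theta),w\rangle=\langle u\lrcorner(\alpha\theta),w\rangle$ for all such $w$. On the left, $\alpha$ is a scalar-valued form, so it commutes out of the pairing up to the Koszul sign governed by $|\alpha|=a+b$ and the form-degree it must pass, and one is left with $\pm\alpha\wedge\langle u\lrcorner\theta,w\rangle=\pm\alpha\wedge\langle u,\theta\wedge w\rangle$ times the sign $(-1)^{(i+j)\ell+(p+q)\sharp u+\ell(\ell-1)/2}$ from \eqref{operator1}. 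On the right, $\alpha\theta\in\Omega^{(p+a,q+b)}(\wedge^\ell V^*)$, so applying \eqref{operator1} to $u\lrcorner(\alpha\theta)$ produces $\langle u,(\alpha\theta)\wedge w\rangle=\langle u,\alpha\wedge\theta\wedge w\rangle$ times $(-1)^{(i+j)\ell+(p+q+a+b)\sharp u+\ell(\ell-1)/2}$. Since $u$ is itself a $\wedge^k V$-valued form, moving $\alpha$ across $u$ inside $\langle\,\cdot\,,\,\cdot\,\rangle$ costs $(-1)^{(a+b)\sharp u}$ (this is \eqref{kob} applied to the scalar factor $\alpha$, or simply bilinearity of $\kappa$ together with the graded-commutativity of $\bB$), and this is precisely the discrepancy between the two sign exponents. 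So the first identity reduces to bookkeeping that mirrors the computation already displayed for Lemma \ref{sign}.

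For the second identity I would again pair against $w\in A^0(\wedge^{k-\ell}V^*)$ and use \eqref{kob}, the Leibniz rule for $\dbar$ on the pairing $\langle\,\cdot\,,\,\cdot\,\rangle$. Starting from $\langle \dbar(u\lrcorner\theta),w\rangle$, note $w$ is holomorphic ($A^0$ means $\Omega^{(0,0)}$-coefficients, locally a holomorphic frame combination, or at any rate $\dbar$ acts only through the form part), so $\langle\dbar(u\lrcorner\theta),w\rangle=\dbar\langle u\lrcorner\theta,w\rangle\pm\langle u\lrcorner\theta,\dbar w\rangle$; the second term vanishes on the target bidegree or is handled by allowing $w$ to be taken $\dbar$-closed (it suffices to test against a local holomorphic frame). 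Then $\dbar\langle u\lrcorner\theta,w\rangle = \dbar\big((-1)^{\cdots}\langle u,\theta\wedge w\rangle\big)$, and applying \eqref{kob} twice gives $(-1)^{\cdots}\big(\langle\dbar u,\theta\wedge w\rangle+(-1)^{\sharp u}\langle u,\dbar\theta\wedge w\rangle+(-1)^{\sharp u+\sharp\theta}\langle u,\theta\wedge\dbar w\rangle\big)$. Re-expressing the first two terms via \eqref{operator1} as $\langle(\dbar u)\lrcorner\theta,w\rangle$ and $\langle u\lrcorner(\dbar\theta),w\rangle$ respectively, with the sign exponents from \eqref{operator1} recomputed for the shifted form-degrees of $\dbar u$ and $\dbar\theta$, produces the claimed formula $\dbar(u\lrcorner\theta)=(-1)^{\sharp\theta}(\dbar u)\lrcorner\theta+u\lrcorner(\dbar\theta)$ once the Koszul signs are matched.

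The only real obstacle is sign-matching: one must track the exponents $(i+j)\ell+(p+q)\sharp u+\ell(\ell-1)/2$ as $i+j$ is replaced by $i+j+1$ (for $\dbar u$) or as the bidegree of $\theta$ shifts (for $\dbar\theta$), and confirm that the differences are absorbed by the explicit $(-1)^{\sharp\theta}$, $(-1)^{\sharp u}$ factors coming out of \eqref{kob}. This is the same kind of careful but routine exponent arithmetic carried out in Lemma \ref{sign}, and I would present it in the same telescoping display format, ending each chain of equalities at $\langle(\dbar u)\lrcorner\theta + (-1)^{\sharp\theta}u\lrcorner(\dbar\theta),w\rangle$ for arbitrary $w$ and invoking nondegeneracy of $\langle\,\cdot\,,\,\cdot\,\rangle$ to conclude. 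As with Lemma \ref{sign}, once the pattern for the first identity is written out the second follows by the same mechanism, so I would either give the $\dbar$-computation in full and leave the $\alpha$-computation to the reader, or vice versa.
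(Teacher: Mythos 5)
Your proposal is correct and follows essentially the same route as the paper: expand (or test) against a local holomorphic dual frame $\{e^I\}$ of $\wedge^{k-\ell}V^*$, unwind $u\lrcorner(\cdot)$ via its defining pairing relation \eqref{operator1}, apply the Leibniz rule \eqref{kob} for $\dbar$ on $\langle\cdot,\cdot\rangle$ (with $\dbar e^I=0$ killing the extra term), and absorb the shifted exponents into the factors $(-1)^{\sharp\theta}$ and $(-1)^{\sharp u}$. The paper simply writes $u\lrcorner\theta=\sum_I\langle u\lrcorner\theta,e^I\rangle e_I$ and differentiates directly, rather than phrasing it as testing against an arbitrary $w$ and citing nondegeneracy, and it proves the $\dbar$ identity in full while leaving the $\alpha$ identity to the reader — but this is a cosmetic difference, not a different argument.
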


  Now we study some simple metric inequalities on $\bB$. Let $h$ be a fixed hermitian metric over $V$. For arbitrary holomorphic local frame  $\{e_i\}$  of $V$  with $\{t^i\}$ its dual frame of $V^*$,  one represents $h=\sum h_{i\bar{j}}t^i\otimes\bar{t}^j$. The induced metric $h^*$ on $V^*$ can be written as $h^*=\sum h^{i\bar{j}}e_i\otimes \bar{e}_{j}$, where $\sum h^{i\bar{k}}h_{j\bar{k}}=\delta_i^j$.

 As in \cite[page 79 Ex 13]{War}, the induced  metric $h_{\wedge^k V}$ on $\wedge^k V$ is
 $$
 (\alpha_1\wedge\cdots\wedge\alpha_k,\beta_1\wedge\cdots\wedge\beta_k)_{h_{\wedge^k V}}:=\det [h(\alpha_i,\beta_j)]
 $$


Similarly $h^*$ induced metrics $h^*_{\wedge^l V^*}$ on $\wedge^l V^*$ and $h_{\wedge^k V}\otimes h_{\wedge^l V}^*$   on $\wedge^k V\otimes \wedge^l V^*$.  
 The induced metric on $\bB=\oplus_{i,j,k,l}\Omega^{(i,j)}(\wedge^k V \otimes\wedge^l V^*)$ would be denoted by $(\cdot,\cdot)$ and $|\alpha|^2:=(\alpha,\alpha)$ for $\alpha\in \bB$. We have the following inequality,

\begin{lemm}\label{inequa2}
For  $u\in \Omega^{(n,0)}(\wedge^k V)$ and  $v^*\in \Omega^{(0,q)}(\wedge^l V^*)$ with $k\ge l$,  one has
\begin{eqnarray*}
	(u\lrcorner v^*,u\lrcorner v^*)&\le &  b(u,u) (v^*,v^*),
\end{eqnarray*}
where $b$ depends on the  ranks of the bundles correspond to $\Omega^{(0,q)}\otimes\wedge^l V^*, \wedge^{k-l}V^*$.
\end{lemm}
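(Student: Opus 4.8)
The plan is to prove the inequality pointwise, in the same spirit as Lemma \ref{mm} and Lemma \ref{inequa1}. First I would fix $z\in M$ and choose a local frame $\{e_i\}$ of $V$ which is unitary for $h$ at $z$, with dual frame $\{e^i\}$ of $V^*$ (unitary for $h^*$). These induce orthonormal frames of $\wedge^k V$, $\wedge^{k-l}V$, $\wedge^l V^*$ and $\wedge^{k-l}V^*$, indexed by increasing multi-indices of the appropriate lengths. Since $\Omega^{(n,0)}$ is a line bundle, I would pick a unit local section $\omega$ of it and an orthonormal frame $\{b^L\}$ of $\Omega^{(0,q)}$, so that $\{\omega\wedge b^L\}$ is orthonormal in $\Omega^{(n,q)}$. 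Then I would write $u=\sum_I x_I\,\omega\otimes e_I$ and $v^*=\sum_{J,L} y_{JL}\,b^L\otimes e^J$ with scalar coefficients, so that at $z$ one has $(u,u)=\sum_I|x_I|^2$ and $(v^*,v^*)=\sum_{J,L}|y_{JL}|^2$.

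Next I would expand $u\lrcorner v^*\in\Omega^{(n,q)}(\wedge^{k-l}V)$ in the induced orthonormal frame $\{(\omega\wedge b^L)\otimes e_K\}$ and read off its coefficients from the defining formula \eqref{operator1}: pairing with $e^K$, only the terms with $J\cap K=\emptyset$ survive (otherwise $e^J\wedge e^K=0$), and then $e^J\wedge e^K=\pm e^{J\sqcup K}$, so the $(\omega\wedge b^L)\otimes e_K$-coefficient is, up to an overall sign, $z_{KL}=\sum_{J:\,J\cap K=\emptyset}(\pm1)\,x_{J\sqcup K}\,y_{JL}$; the signs are irrelevant once absolute values are taken. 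By the triangle inequality and Cauchy--Schwarz one gets $|z_{KL}|^2\le\big(\sum_{J:\,J\cap K=\emptyset}|x_{J\sqcup K}|^2\big)\big(\sum_{J,L}|y_{JL}|^2\big)$, and summing over $K$ and $L$, using that each increasing multi-index $I$ of length $k$ arises as $J\sqcup K$ for exactly $\binom{k}{l}$ pairs $(J,K)$, gives
\[
(u\lrcorner v^*,u\lrcorner v^*)(z)=\sum_{K,L}|z_{KL}|^2\le\binom{k}{l}\,(u,u)(z)\,(v^*,v^*)(z).
\]
Since $k\le n$ and every rank occurring is at least $1$, we have $\binom{k}{l}\le\binom{n}{l}\le b\le b\,c^2$, which already yields the stated (cruder) bound; alternatively, one can avoid exploiting any cancellation among the signs and instead estimate $(u\lrcorner v^*,u\lrcorner v^*)$ by $\big(\sum_{K,L}|z_{KL}|\big)^2$ and apply Cauchy--Schwarz against the number of indices, exactly as in the proof of Lemma \ref{mm}, which produces the factors $b$ and $c^2$ directly.

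This is routine multi-index linear algebra, so I do not expect any genuine obstacle. The only point requiring care is the bookkeeping: identifying precisely which pairs $(J,K)$ of multi-indices contribute to a given coefficient of $u\lrcorner v^*$, reading the combinatorial multiplicity $\binom{k}{l}$ correctly out of the final summation, and confirming that the sign prefactors in \eqref{operator1} drop out. All of this parallels the preceding lemmas of this appendix, so I would follow that pattern verbatim.
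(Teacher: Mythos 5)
Your first argument is correct and in fact gives a sharper constant. It is, however, a genuinely different route from the paper's. The paper does not expand $u$ and $v^*$ in coordinates simultaneously; instead it expands $u\lrcorner v^*$ over the orthonormal frame $\{e_I\}$ of $\wedge^{k-l}V$ only, uses that $\Omega^{(n,0)}$ is a line bundle to factor $u=x\otimes y$ with $x\in\Omega^{(n,0)}$, $y\in A^0(\wedge^kV)$, so each coefficient $\langle u,v^*\wedge e^I\rangle$ becomes $x\langle y,v^*\wedge e^I\rangle$, and then invokes Lemma \ref{inequa1} (to bound $|\langle y,v^*\wedge e^I\rangle|^2$ by $|y|^2|v^*\wedge e^I|^2$) and Lemma \ref{mm} (to bound $|v^*\wedge e^I|^2$ by $bc\,|v^*|^2$), finally summing over the $c$-many indices $I$. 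This buys modularity: the two preceding appendix lemmas are reused as black boxes, and the constant $bc^2$ appears automatically. Your method expands both factors in a unitary frame, writes the coefficient $z_{KL}=\sum_{J\cap K=\emptyset}\pm x_{J\sqcup K}y_{JL}$, applies Cauchy--Schwarz, and counts the $\binom{k}{l}$-fold overcounting; this is more hands-on bookkeeping but yields the cleaner bound $\binom{k}{l}(u,u)(v^*,v^*)$, from which $\binom{k}{l}\le\binom{n}{l}\le b\le bc^2$ finishes. Your proposed ``second alternative'' (bounding $\sum_{K,L}|z_{KL}|^2$ by $\bigl(\sum_{K,L}|z_{KL}|\bigr)^2$ and mimicking Lemma \ref{mm}) is not the paper's proof and, as sketched, does not obviously factor into the product $b\cdot c^2\,(u,u)(v^*,v^*)$ because of the coupling through $J$; it would need more care. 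Since your primary argument already closes the proof, that aside is harmless.
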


\bibliographystyle{amsplain}

\end{document}